\documentclass[a4paper,twoside,11pt]{article}  


\usepackage[english]{babel}  
\usepackage[utf8]{inputenc}
\usepackage[T1]{fontenc}
\usepackage{lmodern}
\usepackage{anyfontsize}     
\usepackage{latexsym}
\usepackage{amsmath,amssymb,amsthm,bm,enumitem} 
\usepackage{mathrsfs}        
\usepackage{mathtools}       
\usepackage{nicefrac}        
\usepackage{marvosym}        

\usepackage[inner=25mm,outer=25mm,bottom=20mm,top=32mm,includefoot]{geometry} 
\usepackage{setspace}            
 \onehalfspacing
\usepackage{fancyhdr}
 \pagestyle{fancy}               
 \setlength{\headheight}{13.6pt} 
 \fancyhf{}                      
 \fancyhead[CO]{\nouppercase{Asymptotic equivariant holomorphic torsion forms}} 
 \fancyhead[CE]{\nouppercase{Pascal Teßmer}}  
 \fancyhead[LE,RO]{\thepage}

\usepackage[dvipsnames]{xcolor} 
\usepackage{graphicx} 
\usepackage{floatflt} 
\usepackage{caption}  
\usepackage{float}    
\usepackage{pdfpages} 
\usepackage{tikz-cd}  
 \usetikzlibrary{arrows.meta,calc,decorations.markings,math}   
\usepackage{forest}   
\usepackage{array}    
\usepackage{tasks}    

\usepackage{makeidx}     
 \makeindex  
\usepackage[refpage,german]{nomencl} 
 \makenomenclature
\usepackage{hyperref}  
 \hypersetup{colorlinks=true,citecolor=Green,linkcolor=blue,urlcolor=Blue}

\usepackage{tocbasic}
 \DeclareTOCStyleEntry[beforeskip=.7em plus 1pt,pagenumberformat=\textbf]{tocline}{section} 
\usepackage{sectsty}               
 \sectionfont{\centering}
 \subsectionfont{\fontsize{13}{15}\selectfont}
\setcounter{section}{-1}           
\numberwithin{equation}{section}   
\allowdisplaybreaks                
\parskip=0pt          
     
 

\theoremstyle{plain}
\newtheorem{Theorem}{Theorem}[subsection]

\newtheorem{Lemma}[Theorem]{Lemma}

\newenvironment{customthm}[1]{\innercustomthm}{\endinnercustomthm} 

\theoremstyle{definition}
\newtheorem{Definition}[Theorem]{Definition}

\theoremstyle{remark}
\newtheorem{Remark}[Theorem]{Remark}

\newcommand{\N}{\mathbf{N}}   
\newcommand{\Z}{\mathbf{Z}}   
\newcommand{\R}{\mathbf{R}}   
\newcommand{\C}{\mathbf{C}}   

\newcommand{\Cl}{\mathrm{Cl}}

\newcommand{\ve}{\varepsilon}
\newcommand{\vp}{\varphi}
\newcommand{\vr}{\varrho}
\newcommand{\vt}{\vartheta} 
\newcommand{\dist}{\mathrm{dist}} 
\newcommand{\inj}{\mathrm{inj}}   
\newcommand{\dvol}{d\mathrm{vol}}
\newcommand{\pr}{\mathrm{pr}}

\newcommand{\surjright}[1]{\xrightarrow{\hspace{#1}}
\kern-1.9ex\xrightarrow{}} 

\DeclareMathOperator{\id}{id}
\DeclareMathOperator{\Tr}{Tr}        
\DeclareMathOperator{\Sp}{Sp}        
\DeclareMathOperator{\rk}{rk}        
 
\DeclareMathOperator{\End}{End}

\DeclareMathOperator{\Td}{Td}        
\DeclareMathOperator{\ch}{ch}
\DeclareMathOperator{\ctop}{\mathit{c}_{top}} 

\def\Re{\mathrm{Re}\,}

  
\title{Asymptotic equivariant holomorphic torsion forms}
\author{Pascal Teßmer}
\date{}

\begin{document}

\maketitle
\begin{abstract}
\noindent In this paper we study the asymptotic behaviour of the equivariant holomorphic analytic torsion forms associated with increasing powers of a fibrewise positive line bundle. The result is an equivariant extension of a result of Puchol.
\end{abstract}

\tableofcontents

{\let\thefootnote\relax\footnote{\textit{Date}\,: \today.}
\let\thefootnote\relax\footnote{2020 \textit{Mathematics Subject Classification}. 41A60, 58J20, 58J52.}
\let\thefootnote\relax\footnote{\textit{Key words and phrases}. analytic torsion; asymptotics.}}

\addcontentsline{toc}{section}{Introduction}
\section*{Introduction}
The holomorphic analytic torsion was introduced in 1973 by Ray and Singer (\cite{RSi73}) as an analogue of the real analytic torsion. It is a positive real number associated with the spectrum of the Kodaira Laplacian of  holomorphic vector bundles on a compact complex manifold. In the works of Bismut-Gillet-Soulé (\cite{BGS88a,BGS88b,BGS88c}) regarding derterminant bundles, the authors showed that it gives rise to a metric with desired properties. Another application lies in Arakelov geometry where Gillet-Soulé proved an arithmetic Grothendieck-Riemann-Roch Theorem in which the holomorphic analytic torsion appears (\cite{GS92}). In \cite{BVa89}, Bismut and Vasserot studied the asymptotic of the holomorphic torsion associated with increasing powers of a positive line bundle and extended it in \cite{BVa90} by replacing the line bundle with symmetric powers of a Griffiths-positive vector bundle. In context of Arakelov geometry, Gillet-Soulé used the asymptotic expansion for a result on arithmetic ampleness (\cite{GS92}). The result of \cite{BVa89} has been sharpened and generalized by Finski in \cite{F18} where he proved a formula for the full asymptotic of the holomorphic analytic torsion and extended it to orbifolds. 

In \cite{BK92}, Bismut and Köhler introduced the holomorphic analytic torsion form which is an extension of the holomorphic analytic torsion to the family setting. With the provided analytical tool of the holomorphic analytic torsion form and its properties the arithmetic Grothendieck-Riemann-Roch Theorem for higher degrees was established, see \cite{GRS08}. 

In \cite{P23}, Puchol gave an asymptotic formula for the holomorphic analytic torsion forms which generalizes the result of Bismut-Vasserot \cite{BVa89,BVa90} for the family setting as well as for more general bundles. 

The holomorphic analytic torsion has an equivariant version introduced in \cite{K93}. In \cite{KR01}, Köhler-Rössler used the equivariant holomorphic analytic torsion in their work on a Lefschetz type fixed point formula in equivariant Arakelov geometry. Similary, there is a generalization for the family case called equivariant holomorphic analytic torsion form introduced in \cite{M00}. For its applications in Arakelov Geometry, see for instance \cite{K05}. 

Our result concerns the asymptotic of the equivariant holomorphic torsion form and extending certain results of \cite{P23} to the equivariant case. Let us describe it in more detail: Let $\pi:M\rightarrow B$ be a holomorphic fibre bundle with compact fibre $Z$. Let $\omega^M$ be a real $(1,1)$-form on $M$ such that $(\pi,\omega^M)$ defines a Hermitian fibration (cf. Def \ref{Hermitian fibration}). Let $(\mathcal{E},h^\mathcal{E})$ resp. $(\mathcal{L},h^\mathcal{L})$ be a Hermitian holomorphic vector resp. line  bundle on $M$. Let $\Omega^\mathcal{L}$ be the curvature of the Chern connection of $(\mathcal{L},h^\mathcal{L})$.\bigbreak

\noindent\textbf{Assumption.}  We make the assumption that $i\Omega^\mathcal{L}$ is positive along the fibres of $\pi$ (cf. (\ref{pos})).\bigbreak

\noindent For $p\in\N_0$, put $\mathcal{L}^p:=\mathcal{L}^{\otimes p}$. If $B$ is compact, then by the assumption and Kodaira vanishing theorem, we can find $p_0$ such that for $p\ge p_0$, the direct image $R^\bullet\pi_*(\mathcal{E}\otimes\mathcal{L}^p)$ is locally free and vanishes in positive degrees. Now for not necessarily compact $B$, we will assume that such $p_0$ can be chosen uniformly on the compact subsets of $B$. The results in this paper will \textbf{hold for} $\bm{p\ge p_0}$. Let $\gamma$ be an holomorphic isometry of $(M,\omega^M)$ which induces equivariant structures on $(\mathcal{E},h^\mathcal{E}),(\mathcal{L},h^\mathcal{L})$ (cf. Def. \ref{equiv struc}). Let $h^{\mathcal{E}\otimes\mathcal{L}^p}$ be the Hermitian metric on $\mathcal{E}\otimes\mathcal{L}^p$ induced by $h^\mathcal{E}$ and $h^\mathcal{L}$. Let $T_\gamma(\omega^M,h^{\mathcal{E}\otimes\mathcal{L}^p})$ be the associated equivariant holomorphic torsion form of \cite{B13}. Let $Z^\gamma$ denote the fixed-point set of $\gamma$ on $Z$ with connected components $\{Z^\gamma_l\}_{l\in L}$. Put $n_\gamma=\max\{\dim_\C Z^\gamma_l\mid l\in L\}$ and $L_0=\{l\in L\mid \dim_\C Z^\gamma_l=n_\gamma\}$.
The action of $\gamma^\mathcal{L}$ on $\mathcal{L}$ is given by multiplication with $e^{i\theta}$ which is locally constant and hence constant on the $Z^\gamma_l$. For a differential form $\alpha$ on $B$, we denote by $\alpha^{(k)}$ its component of degree $k$. By the term \textit{locally computable}, we will mean quantities which can be expressed by integration along the fibre of forms on $M$ which are locally computable from $\omega^M,h^\mathcal{E},h^\mathcal{L}$. The asymptotic behaviour of the equivariant torsion forms for $p\rightarrow\infty$ is given as follows:
\begin{customthm}{1}\label{Th1}
There exist locally computable differential forms  $\alpha_{\gamma,l},\beta_{\gamma,l}$ on $B$ such that for $0\le k\le\dim_\C B$, as $p\rightarrow\infty$,
\[T_\gamma(\omega^M,h^{\mathcal{E}\otimes\mathcal{L}^p})^{(2k)}=p^{n_\gamma+k}\sum_{l\in L_0}e^{ip\theta}_{|_{Z_l^\gamma}}\left(\alpha_{\gamma,l}\log p+\beta_{\gamma,l}\right)^{(2k)}+o(p^{n_\gamma+k})\]
in the topology of $C^\infty$ convergence on compact subsets of $B$.
\end{customthm}

\noindent\textbf{Acknowledgement.} Some results of this paper were part of the author's PhD thesis at the Heinrich Heine University Düsseldorf. I am deeply indebted to my supervisor Professor Dr. K. Köhler for his support.

\section{Notations and Preliminaries}\label{sec 0}
-\textbf{\textit{General notations.}} For the natural numbers, we use the notations $\N=\{1,2,\ldots\}$, $\N_0=\N\cup\{0\}$. Let $Z$ and $E$ be smooth manifolds and $\pi_E:E\rightarrow Z$ a vector bundle. The space of smooth sections will be denoted by $\Gamma(Z,E)$ and, if not stated otherwise, all sections shall be considered as smooth. The interior product with a vector field $X\in\Gamma(Z,TZ)$ is denoted by $\iota_X$. Now let $Z$ be a complex manifold. We will denote by $T_\R Z$ the real tangent bundle and by $T'Z$ the holomorphic tangent bundle. Set $T_\C Z:=T_\R Z\otimes_\R\C$. Let $J^{T_\R Z}$ be the complex structure on $T_\R Z$ and $J^{T_\C Z}$ its complex linear extension to $T_\C Z$. Let $T^{1,0}Z$ and $T^{0,1}Z$ be the $i$ and $-i$-eigenbundles of $J^{T_\C Z}$. For any $X\in\Gamma(M,T_\C Z)$, we write $X=X^{1,0}+X^{0,1}$ according to the decomposition $T_\C Z=T^{1,0}Z\oplus T^{0,1}Z$. For a complex vector bundle $\pi_E:E\rightarrow Z$, we use the following notations regarding complex differential forms:
\begin{align*}
&{\mathfrak{A}}^k(Z,E):=\Gamma(Z,\Lambda^k T^*_\C Z\otimes E),\quad\mathfrak{A}^\bullet(Z,E):=\bigoplus_{k\ge0}\mathfrak{A}^k(Z,E)&(k\in\N_0),\\
&\mathfrak{A}^{p,q}(Z,E):=\Gamma(Z,\Lambda^p(T^{1,0}Z)^*\wedge\Lambda^q (T^{0,1}Z )^*\otimes E)&(p,q\in\N_0).
\end{align*}
The wedge product will often be omitted where we write $\alpha\beta,\omega^k$ instead of $\alpha\wedge\beta,\omega^{\wedge k}$. The underlying real bundle of a complex vector bundle shall be noted with a subscript $\R$, e.g. $E_\R$. If $(Z,g^{T_\R Z})$ is a Riemannian manifold, $\nabla^{TZ,LC}$ will denote the Levi-Civita connection. The Riemannian volume form will be denoted by $\dvol_{g^{T_\R Z}}$. If $P$ is a smoothing operator acting on $\Gamma(Z,E)$ given by a kernel $p$ with respect to the Riemannian volume form $\dvol_{g^{T_\R Z}}$, then we will use Dirac's notation $\big\langle\cdot\bigm|P\bigm|\cdot\big\rangle$ for the kernel: For $x\in Z$ and $s\in\Gamma(Z,E)$,
\[(Ps)(x)=\int_{y\in Z}\bigl\langle x\bigm|P\bigm|y\bigr\rangle s(y)\,\dvol_{g^{T_\R Z}}(y):=\int_{y\in Z}p(x,y)s(y)\,\dvol_{g^{T_\R Z}}(y).\]
The curvature of a connection $\nabla^E$ will be denoted by $\Omega^E$. If $\nabla^E$ is a Hermitian connection on $(E,h^E)$, the $C^m$-norm on $\Gamma(Z,E)$ induced by $\nabla^E$ and $g^{T_\R Z}$ will be denoted by $\|\cdot\|_{C^m(Z,E)}$ or often just $\|\cdot\|_{C^m(Z)}$ when the range is obvious.\\[0.5em]
\indent-\textbf{\textit{Superspaces.}} We use Quillen's formalism of superbundles (\cite{Q85}). Let $V$ be a finite dimensional $\Z_2$-graded vector space and $\tau$ the involution defining the $\Z_2$-grading. For $A\in\End(V)$, its supertrace will be denoted by 
\[\Tr_s[A]:=\Tr[\tau A].\]
Let $\mathfrak{A},\mathfrak{B}$ be two $\Z_2$-graded algebras. In the whole paper, $\mathfrak{A}\otimes\mathfrak{B}$ will denote the $\Z_2$-graded tensor product as ungraded spaces will be equipped with the trivial superstructure ($\deg v=0~\forall v\in V\setminus\{0\}$). The supertrace extends to $\mathfrak{A}\otimes\End(V)$ by 
\[\Tr_s[a\otimes A]:=a\Tr_s[A]\qquad(a\in\mathfrak{A},~A\in\End(V)).\]
Occasionally, we write $\Tr_s^V$ to emphasize the space. If $H$ is a $\Z_2$-graded Hilbert space, $\tau$ the involution defining its $\Z_2$-grading and $A\in\End(H)$ is trace class, then as before $\Tr_s[A]:=\Tr[\tau A]$.\\[0.5em]
\indent-\textbf{\textit{Family bundles and superconnections.}} Let $M,B$ be complex manifolds and $\pi:M\rightarrow B$ a holomorphic fibre bundle with compact fibre $Z$. Let $\mathcal{E}$ be a complex manifold and $\pi_\mathcal{E}:\mathcal{E}\rightarrow M$ a holomorphic fibre bundle over $M$. For $b\in B$, set $Z_b:={\pi^{-1}\{b\}}$ and $\mathcal{E}_b:=\mathcal{E}_{|_{Z_{b}}}$. We call $\{{\mathcal{E}_b}\}_{b\in B}$ a \textit{family of vector bundles} if $\mathcal{E}_b$ is a vector bundle over $Z_b$ for each $b\in B$. To a given family of vector bundles $\{\mathcal{E}_b\}_{b\in B}$ let $\{\Gamma(Z_b,\mathcal{E}_b)\}_{b\in B}$ be the infinite dimensional vector bundle on $B$ with fibres $\Gamma(Z_b,\mathcal{E}_b)$. Then one has the natural identification
\[\Gamma(B,\Lambda^\bullet T^*_\C B\otimes\{\Gamma(Z_b,\mathcal{E}_b)\}_{b\in B})=\Gamma(M,\pi^*\Lambda^\bullet T^*_\C B\otimes\mathcal{E})\]
which will be used throughout this paper.
A differential operator $A$ acting on $\Gamma(M,\pi^*\Lambda^\bullet T^*_\C B\otimes\mathcal{E})$ is called a \textit{superconnection} if it is of odd parity and satisfies the Leibniz's rule 
\[A(\alpha\wedge\beta)=d^B\alpha\wedge\beta+(-1)^k\alpha\wedge A\beta\qquad(\alpha\in\mathfrak{A}^k(B),~\beta\in\mathfrak{A}^\bullet(B,\{\Gamma(Z_b,\mathcal{E}_b)\}_{b\in B})).\]
It is of the form $A=\sum\limits_{k=0}^{\dim_\R B}A^{(k)}$ with $A^{(k)}:\mathfrak{A}^{\cdot}(B,\{\Gamma(Z_b,\mathcal{E}_b)\}_{b\in B})\rightarrow\mathfrak{A}^{\cdot+k}(B,\{\Gamma(Z_b,\mathcal{E}_b)\}_{b\in B})$. If $D =\{D_b\}_{b\in B}$ is a family of Dirac operators acting on $\{\Gamma(Z_b,\mathcal{E}_b)\}_{b\in B}$, then $A$ is said to be \textit{adapted to} $D$ provided that $A^{(0)}=D$. $A$ will be called \textit{elliptic} if $D^2$ is required to be fibrewise elliptic of order 2 instead of being a generalized Laplacian. The curvature $A^2$ lies in $\mathfrak{A}^\bullet(B,\{\End(\Gamma(Z_b,\mathcal{E}_b))\}_{b\in B})$ since it supercommutes with $\mathfrak{A}^\bullet(B)$ and it has a decomposition $A^2= D^2+A^{2,(+)} $ where $A^{2,(+)}$ raises the exterior degree in $\Lambda^\bullet T^*_\C B$.\\[0.5em]
\indent-\textbf{\textit{Heat kernels.}} Assume the vertical subbundle $\ker T\pi$ is equipped with a metric which its restriction on each fibre defines Riemannian structures $g^{T_\R Z_b}$ on $ Z_b$. For the fibre-product bundle $M\times_\pi M=\{(x,y)\in M\times M\mid\pi(x)=\pi(y)\}$ with fibre at $b\in B$ equals to $Z_b\times Z_b$, we will also use the letter $\pi$ for the submersion $M\times_\pi M\rightarrow B$. In particular, $\pi^*\Lambda^\bullet T^*_\C B$ is a vector bundle over $M\times_\pi M$. Let $\pr_1$ resp. $\pr_2$ be the projection of $M\times_\pi M$ onto the first resp. second component. For two vector bundles $\mathcal{E}_1$ and $\mathcal{E}_2$ on $M$ set 
\[\mathcal{E}_1\boxtimes_{\pi}\mathcal{E}_2:=\pr_1^*\mathcal{E}_1\otimes\pr_2^*\mathcal{E}_2\]
which is a vector bundle over $M\times_\pi M$. Let $\mathscr{K}(\mathcal{E})$ be the bundle over $B$ whose fibre at $b\in B$ is the set of smoothing operators on $\Gamma(Z_b,\mathcal{E}_b)$ and let $K=\{K_b\}_{b\in B}\in\mathfrak{A}^\bullet (B,\mathscr{K}(\mathcal{E}))$ be a family of smoothing operators with differential form coefficients. Its kernel is an element of $\Gamma(M\times_\pi M,\pi^*\Lambda^\bullet T^*_\C B\otimes\mathcal{E}\boxtimes_\pi\mathcal{E}^*)$ and when restricted to the diagonal, it is a section of $\pi^*\Lambda^\bullet T^*_\C B\otimes\End(\mathcal{E})$ over $M$ where $M$ is identified with its embedding in $M\times_\pi M$ as the diagonal. The $\mathfrak{A}^\bullet(B)$\textit{-valued supertrace} $\Tr_s:\mathfrak{A}^\bullet(B,\mathscr{K}(\mathcal{E}))\rightarrow\mathfrak{A}^\bullet(B)$ sends $K$ to the differential form on $B$ given by
\[b\mapsto\int_{Z_b}\Tr_s\bigl\langle z\bigm| K_b\bigm|z\bigr\rangle\,\dvol_{g^{T_\R Z_b}}(z).\]
The integration along the fibres is denoted with $\int_Z$, thus, $\Tr_s[K]=\int_Z\Tr_s\bigl\langle z\bigm|K\bigm|z\bigr\rangle\,\dvol_{g^{T_\R Z}}(z)$. By \cite[Ch. 9.7 Appx.]{BeGeV04}, for $u>0$, there exists an unique heat kernel for the curvature $A^2$ of a superconnection adapted to $D$ with corresponding family of smoothing operators denoted by $\exp(-uA^2)$ or $e^{-uA^2}\in\mathfrak{A}^\bullet(B,\mathscr{K}(\mathcal{E}))$. Its construction shows that this also holds for elliptic superconnections. In particular, $e^{-uA^2}$ is fibrewise trace-class.\\[0.5em]
\indent-\textbf{\textit{Landau symbol.}} For a family of tensors $(\eta_u)_u$ on a manifold depending on $u\in\N_0$ or $u\in\R_{>0}$, we say that as $u\rightarrow\infty$ (resp. $u\searrow0$), $\eta_u=\eta+O(f(u))$ in the $C^\infty$ topology, if and only if for any compact set $K$ of the base manifold and any $m\in\N_0$, there exists $C_{m,K}>0$ such that for every $u\ge1$ (resp. $u\le1$), the norm of $\eta_u-\eta$ and all its derivatives of order $\le m$ over $K$ are bounded by $C_{m,K}f(u)$.

\section{Equivariant Holomorphic Torsion Forms}\label{sec 1}
In this section we give the definition of the equivariant holomorphic torsion forms of \cite[(4.11.3)]{B13}. Subsection \ref{subsec 1.1} summarizes the definitions of a Hermitian fibration and objects associated to it; Subsection \ref{subsec 1.2} is devoted to the Bismut superconnection and Subsection \ref{subsec 1.3} describes the construction of the torsion forms. We only recall the necessary tools from \cite{B13,P23} needed for the definition of the torsion forms; for a comprehensive treatment as well as a more geometric and adiabatic limit point of view on this subject, we refer to \cite{B13}.

\subsection{Hermitian Fibrations}\label{subsec 1.1}
Let $M$ and $B$ be complex manifolds and $\pi:M\rightarrow B$ be a holomorphic fibre bundle with compact fibre $Z$. Let $n,m$ and $d=n+m$ be the complex dimension of $Z,B$ and $M$. The holomorphic vertical subbundle of $T'M$ will be denoted by $TZ$ with fibres given by $T_pZ=(T'Z_{\pi(p)})_{|_p}$. The real vector bundle $T_\R Z$ is equipped with a complex structure $J^{T_\R Z}\in\Gamma(M,\End(T_\R Z))$. Let $\omega^M\in\mathfrak{A}^{1,1}(M)$ be a real $(1,1)$-form on $M$. We first recall the definition of a Hermitian fibration (\cite[p. 22]{B13}, \cite[Sec. 2.1]{P23}).
\begin{Definition}\label{Hermitian fibration}
The tuple $(\pi,\omega^M)$ is called  a \textit{Hermitian fibration} if for $X,Y\in\Gamma(M,T_\R Z)$,
\[g^{T_\R Z}(X,Y):=\omega^M(X,J^{T_\R Z}Y)\]
defines a metric\footnote[1]{Note that here the sign convention for $\omega^M$ differs from \cite{B13,P23}.} on $T_\R Z$. If additionally $\omega^M$ is closed, then $(\pi,\omega^M)$ is called a \textit{Kähler fibration}.
\end{Definition}
\noindent We will assume that the given data $(\pi,\omega^M)$ is in fact a Hermitian fibration. Let $T^HM\subset T'M$ be the orthogonal bundle to $TZ$ in $T'M$ with respect to $\omega^M$. We get a decomposition of smooth vector bundles
\[T'M= T^HM\oplus TZ,\qquad T_\R M= T^H_\R M\oplus T_\R Z\]
in horizontal and vertical parts. In general, $T^HM$ is not a holomorphic subbundle of $T'M$. One then has the isomorphism 
\[T^HM\cong\pi^*T'B,\qquad\Lambda^\bullet T^*_\R M\cong\pi^*\Lambda^\bullet T^*_\R B\otimes\Lambda^\bullet T^*_\R Z.\]
For $U\in\Gamma(B,T_\C B)$, we denote by $U^H\in\Gamma(M,T_\R^HM\otimes_\R\C)$ its horizontal lift. Set
\[\omega^H:=\omega^M_{|_{{T^H_\R M\otimes T^H_\R M}}},\qquad\omega^Z:=\omega^M_{|_{{T_\R Z\otimes T_\R Z}}}\]
and extend $\omega^H$ and $\omega^Z$ by zero to $(T_\R^HM\oplus T_\R Z)^{\otimes2}$ so that we have $\omega^M=\omega^H\oplus\omega^Z$.

Let $\pr_{T_\R Z}$ be the orthogonal projection onto the vertical subbundle. For an arbitrary Riemannian metric $g^{T_\R B}$ on $B$ and $g^{T_\R M}:=\pi^*g^{T_\R B}\oplus g^{T_\R Z}$, we define, if $X,Y,Z\in\Gamma(M,T_\R M),$
\begin{align*}
\nabla^{T_\R Z,LC}:=~&\pr_{T_\R Z}\circ\nabla^{T_\R M,LC},\\
\nabla^{T_\R M,\oplus}:=~&\pi^*\nabla^{T_\R B,LC}\oplus\nabla^{T_\R Z,LC},\\
T(X,Y):=~&\nabla^{T_\R M,\oplus}_XY-\nabla^{T_\R M,\oplus}_YX-[X,Y],\\
\rho(X,Y,Z):=~&g^{T_\R M}(\nabla^{T_\R M,LC}_XY-\nabla^{T_\R M,\oplus}_XY,Z).
\end{align*}
We cite some characteristics of these objects (\cite[Thm. 1.9]{B86}, \cite[Thm. 1.7]{BGS88b}): The connection $\nabla^{T_\R Z,LC}$ on $T_\R Z$ does not depend on the choice of $g^{T_\R B}$ and when restricted to a fibre, it is its Levi-Civita connection. The torsion $T$ of $\nabla^{T_\R M,\oplus}$ is an element of $\mathfrak{A}^2(M,T_\R Z)$, independent of $g^{T_\R B}$ and its complex linear extension lies in $\mathfrak{A}^{1,1}(M,T_\C Z)$. Finally, $\rho\in\Gamma(M,T_\R^*M^{\otimes 3})$ is also independent of $g^{T_\R B}$.

Let $\pi_\mathcal{E}:\mathcal{E}\rightarrow M$ be a holomorphic vector bundle over $M$. As mentioned in Section \ref{sec 0}, we can associate to the family of vector bundles $\{\Lambda^{0,\bullet}T^*Z_b\otimes\mathcal{E}_b\}_{b\in B}$ the infinite dimensional vector bundle $\{\Gamma(Z_b,(\Lambda^{0,\bullet}T^*Z_b\otimes\mathcal{E}_b)\}_{b\in B}=\{\mathfrak{A}^{0,\bullet}(Z_b,\mathcal{E}_b)\}_{b\in B}$ over $B$ with the identification
\[\mathfrak{A}^\bullet(B,\{\mathfrak{A}^{0,\bullet}(Z_b,\mathcal{E}_b)\}_{b\in B})=\Gamma(M,\pi^*\Lambda^\bullet T^*_\C B\otimes\Lambda^{0,\bullet}T^*Z\otimes\mathcal{E}).\]
Let $h^{TZ}$ be the Hermitian metric on $TZ$ obtained from $g^{T_\R Z}$ and let $h^{T^{1,0}Z}$ be the Hermitian metric on $T^{1,0}Z$ induced by $h^{TZ}$ via the isomorphism $(T_\R Z,J^{T_\R Z})\cong T^{1,0}Z,~X\mapsto\sqrt{2}X^{1,0}$. Let $h^\mathcal{E}$ be a Hermitian metric on $\mathcal{E}$ and $h^{\Lambda^{0,\bullet}T^*Z\otimes\mathcal{E}}$ the Hermitian metric on $\Lambda^{0,\bullet}T^*Z\otimes\mathcal{E}$ induced by $h^{T^{1,0}Z},h^\mathcal{E}$. We define a Hermitian product $\langle\cdot,\cdot\rangle_{L^2}$ on $\{\mathfrak{A}^{0,\bullet}(Z_b,\mathcal{E}_b)\}_{b\in B}$ associated to $h^{TZ}$ and $h^\mathcal{E}$ by, for $s_1,s_2\in\Gamma(B,\{\mathfrak{A}^{0,\bullet}(Z_b,\mathcal{E}_b)\}_{b\in B})$,
\[\langle s_1,s_2\rangle_{L^2,b}:=\frac{1}{(2\pi)^{\dim_\C Z}}\int_{Z_b}h^{\Lambda^{0,\bullet}T^*Z\otimes\mathcal{E}}(s_1,s_2)\,\dvol_{g^{T_\R Z_b}}.\]

We will use the following notations regarding local bases: Let $\{w_i\}_i$ be an local orthonormal basis of $(T^{1,0}Z,h^{T^{1,0}Z})$. An orthonormal basis $\{e_i\}_i$ of $(T_\R Z, g^{T_\R Z})$ is given by $e_{2i-1}=\frac{1}{\sqrt{2}}(w_i+\overline{w}_i)$ and $e_{2i}=\frac{1}{\sqrt{2}}(w_i-\overline{w}_i)$. Let $\{f_\alpha\}_\alpha$ be a basis of $T_\R B$ which will be also identified as a basis of $T^H_\R M$. Latin indices $i,j,\ldots$ will label vertical variables and greek indices $\alpha,\beta,\ldots$ label horizontals variables. If $a_j$ is greek or latin with corresponding $\mathfrak{e}_{a_j}\in\{e_i\}_i\cup\{f_\alpha\}_\alpha$, then for $R\in T_\R^*M^{\otimes q}$, we write $R_{a_1,\ldots,a_q}:=R(\mathfrak{e}_{a_1},\ldots,\mathfrak{e}_{a_q})$. Dual bases are denoted by upper indices. These notations will be used throughout this paper.

\subsection{Bismut Superconnection}\label{subsec 1.2}
Let  $\Cl(T_\R Z,g^{T_\R Z})=\bigoplus_j T_\R Z^{\otimes j}/I$ be the Clifford algebra\footnote[2]{In \cite[Ch. 3.1]{B13}, the Clifford algebra with respect to $g^{T_\R Z}/2$ was considered which is also common in complex geometry. Here, we follow \cite[p. 88]{P23}.} of $(T_\R Z, g^{T_\R Z})$ where $I$ is the ideal in the tensor algebra generated by  $\{v\otimes v+g^{T_\R Z}(v,v)\mid v\in T_\R Z\}$. Then $\Cl(T_\R Z,g^{T_\R Z})\otimes_\R\C=\Cl(T_\C Z,g^{T_\C Z})$ with $g^{T_\C Z}$ the complex bilinear extension of $g^{T_\R Z}$. For $X\in\Gamma(M,T_\C Z)$ let 
\[X^{1,0\flat}:=g^{T_\C Z}(X^{1,0},\cdot)\in\Gamma(M,(T^{0,1}Z)^*)\]
be the metric dual of $X^{1,0}$. The bundle $\Lambda^{0,\bullet}T^*Z\otimes\mathcal{E}$ becomes a $\Cl(T_\C Z,g^{T_\C Z})$-module with Clifford action induced by, if $\alpha\in\Gamma(M,\Lambda^{0,\bullet}T^*Z)$ and $s\in\Gamma(M,\mathcal{E})$,
\[c(X)(\alpha\otimes s):=\sqrt{2}(X^{1,0\flat}\wedge\alpha-\iota_{X^{0,1}}\alpha)\otimes s.\]
The inclusion $T_\C Z\overset{c}{\xhookrightarrow{}}\Cl(T_\C Z,g^{T_\C Z})$ extends to $\pi^*\Lambda^\bullet T^*_\C B\otimes T_\C Z\overset{c}{\xhookrightarrow{}}\pi^*\Lambda^\bullet T^*_\C B\otimes\Cl(T_\C Z,g^{T_\C Z})$ as $\id\otimes c$.
Let  $\Cl(Z_b,g^{T_\R Z_b})$ be the Clifford algebra of $(Z_b,g^{T_\R Z_b})$ and set $T^H:=T_{|_{{T^H_\R M\otimes T^H_\R M}}}$ which will be viewed as a section of $\pi^*\Lambda^2T^*_\R B\otimes T_\R Z$. Then we have
\[c(T^H)\in\Gamma(M,\pi^*\Lambda^2T^*_\R B\otimes\Cl(T_\R Z,g^{T_\R Z}))=\Gamma(B,\Lambda^2T^*_\R B\otimes\{\Cl(Z_b,g^{T_\R Z_b}\}_{b\in B}).\] 
Define the quantization map $(\cdot)^c:\Lambda^\bullet T^*_\R Z\rightarrow\Cl(T_\R Z,g^{T_\R Z})$ given by
\[(e^{i_1}\wedge\ldots\wedge e^{i_k})^c:=\frac{1}{2^{k/2}}c(e_{i_1})\ldots c(e_{i_k})\quad\text{for}\quad 1\le i_1<\ldots<i_k\le 2n\]
and extend it to $\Lambda^\bullet T^*_\R M\cong\pi^*\Lambda^\bullet T^*_\R B\otimes\Lambda^\bullet T^*_\R Z\overset{(\cdot)^c}{\rightarrow}\pi^*\Lambda^\bullet T^*_\R B\otimes\Cl(T_\R Z,g^{T_\R Z})$ as well as $\C$-linearly to $\Lambda^\bullet T^*_\C M$.

Let $\nabla^{TZ},\nabla^\mathcal{E}$ be the Chern connections of $(TZ,h^{TZ}),(\mathcal{E},h^\mathcal{E})$. Assume temporarily that $T_\R Z$ is equipped with a spin structure which is equivalent to $\det(TZ)$ having a square root $\lambda$ (\cite[Thm. 2.2]{H74}). The line bundle $\lambda$ is then a Hermitian holomorphic vector bundle on $M$ with Chern connection $\nabla^\lambda$. Let 
\[\mathcal{S}^{TZ}:=\Lambda^{0,\bullet}T^*Z\otimes\lambda^{-1}\]
be the spinor bundle of $(T_\R Z,g^{T_\R Z})$ with connection $\nabla^{\mathcal{S}^{TZ,LC}}$ induced by $\nabla^{T_\R Z,LC}.$ Then $\Lambda^{0,\bullet}T^*Z\otimes\mathcal{E}=\mathcal{S}^{TZ}\otimes\lambda\otimes\mathcal{E}$ has a connection $\nabla^{\Lambda^{0,\bullet}T^*Z\otimes\mathcal{E},LC}$ induced by $\nabla^{\mathcal{S}^{TZ,LC}},\nabla^\lambda,\nabla^{\mathcal{E}}$. Because the square root of $\det(TZ)$ always exists locally, the connection $\nabla^{\Lambda^{0,\bullet}T^*Z\otimes\mathcal{E},LC}$ is always defined. 

Define $k\in\Gamma(B,T^*_\R B\otimes\{C^\infty(Z_b)\}_{b\in B})$ by, if $U\in\Gamma(B,T_\R B)$, 
\[k(U):=\sum_{i=1}^{2n}e^i(T(U^H,e_i))=\Tr^{T_\R Z}\bigl[T(U^H,\cdot)_{|_{T_\R Z}}\bigr]\in\Gamma(B,\{C^\infty(Z_b)\}_{b\in B})=C^\infty(M).\]
For $U\in\Gamma(B,T_\R B)$ and $s\in\Gamma(B,\{\mathfrak{A}^{0,\bullet}(Z_b,\mathcal{E}_b)\}_{b\in B})$, set
\[\nabla^{\{\mathfrak{A}^{0,\bullet}(Z_b,\mathcal{E}_b)\}_{b\in B}}_Us:=\nabla^{\Lambda^{0,\bullet}T^*Z\otimes\mathcal{E},LC}_{U^H}s+\frac{1}{2}k(U).\]
This connection on $\{\mathfrak{A}^{0,\bullet}(Z_b,\mathcal{E}_b)\}_{b\in B}$ extends to $\Lambda^\bullet T_\C^*B\otimes\{\mathfrak{A}^{0,\bullet}(Z_b,\mathcal{E}_b)\}_{b\in B}$ by $\C$-linearly in $U$, the de Rham operator $d^B$ of $B$ and Leibniz's rule (cf. Section \ref{sec 0}).

For $u>0$, let $\psi_u$ be the automorphism of $\Lambda^\bullet T_\R^* B$ such that if $\alpha\in\Lambda^kT_\R^*B$, then 
\[\psi_u\alpha:=u^k\alpha.\]
\begin{Definition}[cf. {\cite[Thm. 3.8.1]{B13}}] The \textit{Levi-Civita superconnection} $A^{LC}$, the \textit{Bismut superconnection} $B$ and its rescaled version $B_u$ for $u>0$ on $\{\mathfrak{A}^{0,\bullet}(Z_b,\mathcal{E}_b)\}_{b\in B}$ are defined by
\begin{align*}
&A^{LC}:=\nabla^{\{\mathfrak{A}^{0,\bullet}(Z_b,\mathcal{E}_b)\}_{b\in B}}+\sum_{i=1}^{2n}\frac{c(e_i)}{\sqrt{2}}\nabla^{\Lambda^{0,\bullet}T^*Z\otimes\mathcal{E},LC}_{e_i}-\frac{c(T^{H})}{2\sqrt{2}},\\
&B:=A^{LC}-\frac{1}{2}\left((\overline{\partial}^M-\partial^M)i\omega^M\right)^c,\\
&B_u:=\sqrt{u}\psi_{1/\sqrt{u}}B\psi_{\sqrt{u}}.
\end{align*}
\end{Definition}
For $b\in B$, let $\overline{\partial}^{Z_b}$ be the Dolbeault operator acting on $\Gamma(Z_b,\Lambda^{0,\bullet}T^*Z_b\otimes\mathcal{E}_b)$. Let $\overline{\partial}^{Z_b*}$ be its formal adjoint with respect to $\langle\cdot,\cdot\rangle_{L^2,b}$ and put $\overline{\partial}^Z+\overline{\partial}^{Z*}:=\{\overline{\partial}^{Z_b}+\overline{\partial}^{Z_b*}\}_{b\in B}$. By \cite[Thm. 3.7.3]{B13}, $B^{(0)}=\overline{\partial}^Z+\overline{\partial}^{Z*}$. In particular, $B$ is elliptic. Furthermore, $B^2_u=u\psi_{1/\sqrt{u}}B^2\psi_{\sqrt{u}}$ and $e^{-B^2_u}=\psi_{1/\sqrt{u}}\bigl(e^{-{uB^2}}\bigr)$.

In the case of a Kähler fibration, these objects simplify as following: Let $\nabla^{\Lambda^{0,\bullet}T^*Z\otimes\mathcal{E}}$ be the connection on $\Lambda^{0,\bullet}T^*Z\otimes\mathcal{E}$ induced by $\nabla^{TZ},\nabla^\mathcal{E}$. Then
\begin{align*}
&\nabla^{\Lambda^{0,\bullet}T^*Z\otimes\mathcal{E},LC}=\nabla^{\Lambda^{0,\bullet}T^*Z\otimes\mathcal{E}},\quad k=0,\quad \sum_{i=1}^{2n}\frac{c(e_i)}{\sqrt{2}}\nabla^{\Lambda^{0,\bullet}T^*Z\otimes\mathcal{E},LC}_{e_i}=\overline{\partial}^Z+\overline{\partial}^{Z*},\quad B=A^{LC}.
\end{align*}
For $u>0$, define a fibrewise connection $\overline{\nabla}_u$ on $\pi^*\Lambda^\bullet T^*_\R B\otimes\Lambda^{0,\bullet}T^*Z\otimes\mathcal{E}$ by
\begin{align*}
\overline{\nabla}_{u,e_i}:=~&\nabla^{\Lambda^{0,\bullet}T^*Z\otimes\mathcal{E},LC}_{e_i}+\frac{1}{\sqrt{2u}}\sum_{j,\alpha}\rho_{i,j,\alpha}c(e_j)f^\alpha+\frac{1}{2u}\sum_{\alpha,\beta}\rho_{i,\alpha,\beta}f^\alpha f^\beta\\
&-\frac{1}{2}\psi_{1/\sqrt{u}}\left(\iota_{e_i}(\overline{\partial}^M-\partial^M)i\omega^M\right)^c\psi_{\sqrt{u}}.
\end{align*}
We will use the following standard notation: if $C\in\Gamma(M,T_\R^*Z\otimes\pi^*\Lambda^\bullet T^*_\R B\otimes\End(\Lambda^{0,\bullet}T^*Z\otimes\mathcal{E}))$, then
\begin{align*}
\sum\left(\nabla^{\Lambda^{0,\bullet}T^*Z\otimes\mathcal{E},LC}_{e_i}+C(e_i)\right)^2:=&\sum_{i=1}^{2n}\left(\nabla^{\Lambda^{0,\bullet}T^*Z\otimes\mathcal{E},LC}_{e_i}+C(e_i)\right)^2\\
&-\nabla^{\Lambda^{0,\bullet}T^*Z\otimes\mathcal{E},LC}_{\sum_{i=1}^{2n}\nabla^{T_\R Z,LC}_{e_i}e_i}-C\Bigl(\sum_{i=1}^{2n}\nabla^{T_\R Z,LC}_{e_i}e_i\Bigr).
\end{align*}
Let $s^Z$ be the scalar curvature of $(Z,g^{T_\R Z})$. By the Bismut-Lichnerowicz formula (\cite[Thm. 3.9.3]{B13}, see also \cite[Thm. 2.7]{P23}), one has the identity
\begin{align}\label{Lichnerowicz}
B_u^2=&-\frac{u}{2}\sum(\overline{\nabla}_{u,e_i})^2+u\frac{s^Z}{8}+\frac{u}{4}\sum_{i,j}c(e_i)c(e_j)\Bigl(\Omega^\mathcal{E}+\frac{1}{2}\Tr\Omega^{TZ}\Bigr)(e_i,e_j)\nonumber\\
&+\sqrt{\frac{u}{2}}\sum_{i,\alpha}c({e}_i)f^\alpha\Big(\Omega^\mathcal{E}+\frac{1}{2}\Tr\Omega^{TZ}\Big)(e_i,f_\alpha)+\sum_{\alpha,\beta}\frac{f^\alpha f^\beta}{2}\Bigl(\Omega^\mathcal{E}+\frac{1}{2}\Tr\Omega^{TZ}\Bigr)(f_\alpha,f_\beta)\nonumber\\
&+u\psi_{1/\sqrt{u}}\left(\overline{\partial}^M\partial^Mi\omega^M\right)^c+\frac{u}{16}\left\|(\overline{\partial}^Z-\partial^Z)\omega^Z\right\|_{\Lambda^\bullet T^*_\R Z}.
\end{align}

\subsection{Equivariant Holomorphic Torsion Forms}\label{subsec 1.3}
\begin{Definition}\label{equiv struc} Let $\gamma$ be a holomorphic isometry of $(M,\omega^M)$ which preserves the fibres, i.e. $\pi\circ\gamma=\pi$. A $\gamma$-\textit{equivariant structure} on $(\mathcal{E},h^\mathcal{E})$ is a holomorphic map $\gamma^\mathcal{E}:\mathcal{E}\rightarrow\mathcal{E}$ such that $\pi\circ\gamma^\mathcal{E}=\gamma\circ\pi^\mathcal{E}$, $\gamma^\mathcal{E}:\mathcal{E}_x\rightarrow\mathcal{E}_{\gamma(x)}$ is linear for all $x\in M$ and $h^\mathcal{E}$ is preserved by $\gamma^\mathcal{E}$.
\end{Definition}
\noindent We assume that a $\gamma$-equivariant structure is given. One has an induced action on $\Gamma(M,\mathcal{E})$ by $(\gamma s)(x):=\gamma^\mathcal{E}s(\gamma^{-1}x)$. In general, $\gamma,\gamma^\mathcal{E}$ induce actions on various tensor bundles and sections which will often, if not stated otherwise, simply be denoted by $\gamma$ again. In particular, $\gamma$ acts on $\mathfrak{A}^\bullet(B,\{\mathfrak{A}^{0,\bullet}(Z_b,\mathcal{E}_b)\}_{b\in B})$. Let $M^\gamma\subset M$ and $Z^\gamma\subset Z$ be the fixed-point sets of $\gamma$. 

We make the assumption that the direct image $R^\bullet\pi_*\mathcal{E}$ of $\mathcal{E}$ by $\pi$ is locally free. For each base point $b\in B$, let $H^\bullet(Z_b,\mathcal{E}_b)$ be the cohomology of the sheaf of holomorphic sections of $\mathcal{E}$ over the fibre $Z_b$. Then by the assumption, the $H^\bullet(Z_b,\mathcal{E}_b)$ form a $\Z$-graded holomorphic vector bundle denoted by $H^\bullet(Z,\mathcal{E}_{|_Z})$ on $B$ and $R^\bullet\pi_*\mathcal{E}=H^\bullet(Z,\mathcal{E}_{|_Z})$. By Hodge theory, for every $b\in B$,
\begin{align*}
H^\bullet(Z_b,\mathcal{E}_b)\cong\ker(\overline{\partial}^{Z_b}+\overline{\partial}^{Z_b*}).
\end{align*}
From this isomorphism and the restriction of the Hermitian product $\langle\cdot,\cdot\rangle_{L^2}$ onto the right side, $h^{TZ}$ and $h^\mathcal{E}$ induce a $\gamma$-invariant metric $h^{H^\bullet(Z,\mathcal{E}_{|_Z})}$ on $H^\bullet(Z,\mathcal{E}_{|_Z})$ for which the $H^k(Z,\mathcal{E}_{|_Z})$ are mutually orthogonal. Let $\nabla^{H^\bullet(Z,\mathcal{E}_{|_Z})}$ be the Chern connection of $(H^\bullet(Z,\mathcal{E}_{|_Z}),h^{H^\bullet(Z,\mathcal{E}_{|_Z})})$.

Let $\omega^{H\overline{H}}\in\Gamma(B,\Lambda^2T^*_\R B\otimes\{C^\infty(Z_b)\}_{b\in B})$ be given by, if $U,V\in\Gamma(B,T_\R B)$,
\[\omega^{H\overline{H}}(U,V):=\omega^M(U^H,V^H).\]
\begin{Definition} Let $N_V$ be the operator on $\Lambda^{0,\bullet}T^*Z$ acting by multiplication with $k$ on $\Lambda^{0,k}T^*Z$. Then $N_V$ acts on $\mathfrak{A}^\bullet(B,\{\mathfrak{A}^{0,\bullet}(Z_b,\mathcal{E}_b)\}_{b\in B})$. For $u>0$, the \textit{number operator} $N_u$ on $\mathfrak{A}^\bullet(B,\{\mathfrak{A}^{0,\bullet}(Z_b,\mathcal{E}_b)\}_{b\in B})$ is given by
\[N_u:=N_V-i\frac{\omega^{H\overline{H}}}{u}.\]
\end{Definition}
Let $\vp\in\Gamma(B,\End(\Lambda^\bullet T^*_\C B))$ be the endomorphism defined by, if $\alpha$ is a homogeneous differential form,
\[\vp:\alpha\mapsto(2\pi i)^{-\nicefrac{\mathrm{deg}\alpha}{2}}\alpha.\]
\begin{Theorem}[see {\cite[Prop. 4.6.1, Thm. 4.10.4]{B13}}]\label{asymp zeta} The following asymptotics holds in the $C^\infty$ topology on $B$:
\begin{enumerate}[label=\alph*)]
\item There exist locally computable forms $c_{\gamma,j},C_{\gamma,j}\in\bigoplus\limits_{p\ge0}\mathfrak{A}^{p,p}(B),~j\ge d,$ such that for any $k\in\N_0$, as $u\searrow0,$\\
\mbox{}\hfil$\begin{aligned}[t]
\vp\Tr_s\bigl[\gamma\exp(-B_u^2)\bigr]&=\sum_{j=-d}^kc_{\gamma,j}u^j+O(u^{k+1}),\\
\vp\Tr_s\bigl[\gamma N_u\exp(-B_u^2)\bigr]&=\sum_{j=-d}^kC_{\gamma,j}u^j+O(u^{k+1}).
\end{aligned}$
\item As $u\rightarrow\infty,$\\ 
\mbox{}\hfil$\begin{aligned}[t]
&\vp\Tr_s\bigl[\gamma N_u\exp(-B_u^2)\bigr]=\vp\Tr_s^{H^\bullet(Z,\mathcal{E}_{|_Z})}\bigl[\gamma N_Ve^{-\Omega^{H^\bullet(Z,\mathcal{E}_{|_Z})}}\bigr]+O\Bigl(\frac{1}{\sqrt{u}}\Bigr).
\end{aligned}$
\end{enumerate}
\end{Theorem}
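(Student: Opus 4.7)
The plan is to follow the two-regime strategy underlying \cite[Prop.~4.6.1, Thm.~4.10.4]{B13}: for $u\searrow 0$ we reduce to a local index theorem style analysis, while for $u\to\infty$ we use the spectral gap of $(\overline{\partial}^Z+\overline{\partial}^{Z*})^2$ together with Hodge theory. In both cases the $\gamma$-invariance of the setup plays well with the $\mathfrak{A}^\bullet(B)$-valued supertrace because $\gamma$ commutes with $B_u$ and with $N_u$, so all constructions can be carried out $\gamma$-equivariantly.

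For part (a), I would work in a neighbourhood of a fibre $Z_b$ and expand the heat kernel $\langle z\mid\gamma e^{-B_u^2}\mid z\rangle$ via a Duhamel parametrix adapted to the Bismut-Lichnerowicz formula~(\ref{Lichnerowicz}). Since $d(z,\gamma z)$ is bounded below away from $Z^\gamma$ and $e^{-B_u^2}$ satisfies standard Gaussian off-diagonal estimates, the contribution of $\{z\mid\dist(z,Z^\gamma)>\varepsilon\}$ is $O(u^\infty)$, so the supertrace localises to a tubular neighbourhood of $M^\gamma$. On this neighbourhood I would apply Getzler's rescaling to the Clifford variables combined with the rescaling $\psi_{1/\sqrt u}$ built into $B_u$; the limit operator is a harmonic oscillator in the normal directions to $M^\gamma$, and Mehler's formula gives a finite asymptotic expansion in powers of $u$ starting at $u^{-d}$. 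The coefficients are pointwise polynomials in the curvatures $\Omega^{TZ},\Omega^\mathcal{E}$, in $\omega^M,h^\mathcal{E},h^\mathcal{L}$ and their derivatives, integrated along the fibres of $M^\gamma\to B$, which is exactly what is meant by locally computable; the fact that they lie in $\bigoplus_{p\ge0}\mathfrak{A}^{p,p}(B)$ follows from the $(1,1)$-type of the building blocks $\overline{\partial}^M\partial^Mi\omega^M$ and of the Chern curvatures.

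For part (b), the key point is that $B^{(0)}=\overline{\partial}^Z+\overline{\partial}^{Z*}$ has a fibrewise spectral gap: the zero eigenspace is the space of $\gamma$-invariant harmonic forms $\cong H^\bullet(Z,\mathcal{E}_{|_Z})$ and the non-zero spectrum is bounded below uniformly on compacts in $B$. Decomposing $e^{-uB^{(0)2}}$ by spectral projection onto $\ker$ and $\ker^\perp$ and treating the higher-form components of $B_u^2-uB^{(0)2}$ via Duhamel as a bounded perturbation, the $\ker^\perp$-part contributes $O(e^{-cu})$ while the $\ker$-part converges to the Chern connection heat operator $e^{-\Omega^{H^\bullet(Z,\mathcal{E}_{|_Z})}}$, by the standard identification of the finite-dimensional limit of the Bismut superconnection curvature with the Chern curvature of the cohomology bundle. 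The $O(1/\sqrt u)$ error is produced by the off-diagonal blocks between $\ker$ and $\ker^\perp$ in $N_u$ and in $B_u^{2,(+)}$, which are $O(1/\sqrt u)$ after the $\psi_{1/\sqrt u}$-rescaling; the $\omega^{H\overline{H}}/u$ correction in $N_u$ is suppressed by the spectral projection and contributes only at subleading order.

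The main obstacle is uniform control of all constants in the compact-open topology on $B$, together with the bookkeeping needed to show that the small-$u$ coefficients really have the claimed form-type decomposition; the equivariance itself causes no new analytic difficulty beyond the localisation argument, but aligning conventions with \cite{B13,P23} (in particular the sign convention for $\omega^M$ noted in Definition~\ref{Hermitian fibration} and the Clifford algebra convention fixed in Subsection~\ref{subsec 1.2}) must be done carefully, since sign errors propagate into the coefficients $c_{\gamma,j},C_{\gamma,j}$ that will feed the definition of the torsion form.
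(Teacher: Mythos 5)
The paper does not prove Theorem \ref{asymp zeta}; it simply cites it from \cite[Prop.~4.6.1, Thm.~4.10.4]{B13}, so there is no internal argument to match against. Your sketch is a faithful high-level description of the strategy in that reference: off-diagonal Gaussian decay of the $\gamma$-twisted heat kernel to localise the supertrace near $M^\gamma$, Getzler rescaling of the Clifford variables together with the $\psi_{1/\sqrt u}$ scaling built into $B_u$, Mehler's formula for the limiting harmonic oscillator, and a spectral-gap/Hodge-theory plus Duhamel analysis for $u\to\infty$.

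Two points deserve correction or caveat. In part~(b) you describe the kernel of $(\overline{\partial}^Z+\overline{\partial}^{Z*})^2$ as ``the space of $\gamma$-invariant harmonic forms.'' It is the full space of harmonic forms $\cong H^\bullet(Z,\mathcal{E}_{|_Z})$, which is preserved by $\gamma$ as a subspace but certainly not fixed pointwise; the theorem concerns $\Tr_s$ of $\gamma$ acting on that space, not a trace over the $\gamma$-invariant part. Second, the claim that the small-$u$ coefficients lie in $\bigoplus_{p\ge0}\mathfrak{A}^{p,p}(B)$ does not simply ``follow from the $(1,1)$-type of the building blocks'': the Getzler rescaling mixes horizontal degrees, and establishing the $(p,p)$ type in \cite{B13} requires a separate conjugation argument exploiting the splitting of the Bismut superconnection into $(1,0)$ and $(0,1)$ components. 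Relatedly, in the non-Kähler setting the extra terms $(\overline{\partial}^M\partial^Mi\omega^M)^c$ and $\|(\overline{\partial}^Z-\partial^Z)\omega^Z\|$ in (\ref{Lichnerowicz}) are precisely what produce the genuinely negative powers of $u$ (up to $u^{-d}$), and the subsequent definition of the torsion form depends on subtracting exactly those $j\le 0$ coefficients; your sketch is compatible with this but it should be stated, since in the Kähler case most of these coefficients vanish.
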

\noindent For $s\in \C$, define formally the zeta function 
\[\zeta_\gamma(s):=-\frac{1}{\Gamma(s)}\int_0^\infty u^{s-1}\vp\left(\Tr_s\bigl[\gamma N_u\exp(-B_u^2)\bigr]-\Tr_s^{H^\bullet(Z,\mathcal{E}_{|_Z})}\bigl[\gamma N_Ve^{-\Omega^{H^\bullet(Z,\mathcal{E}_{|_Z})}}\bigr]\right)\,du.\]
By splitting the integral into $\int_0^\infty=\int_0^1+\int_1^\infty$, one can separate $\zeta_\gamma=\zeta^{[0,1]}_\gamma+\zeta^{[1,\infty[}_\gamma$. Due to Theorem \ref{asymp zeta}, $\zeta^{[0,1]}_\gamma$ resp. $\zeta^{[1,\infty[}_\gamma$ are well-defined for $\Re s>d$ resp. $\Re s<\frac{1}{2}$ and $\zeta^{[0,1]}_\gamma$ extends to a holomorphic function near $s=0$. Thus, $\zeta_\gamma$ is a well-defined holomorphic function in $s$ near $0$.
\begin{Definition}\label{holom Torsion} The \textit{equivariant holomorphic torsion form} is defined as
\[T_\gamma(\omega^M,h^\mathcal{E}):=\zeta'_\gamma(0).\]
The components in the different degrees of $T_\gamma(\omega^M,h^\mathcal{E})$ are referred as \textit{equivariant holomorphic torsion forms}.
\end{Definition}
From Theorem \ref{asymp zeta}, one derives the equality 
\begin{align*}
T_\gamma(\omega^M,h^\mathcal{E})=&-\int_0^1\biggl(\vp\Tr_s\bigl[\gamma N_u\exp(-B_u^2)\bigr]-\sum_{j=-d}^0 C_{\gamma,j}u^j\biggr)\,\frac{du}{u}\\
&-\int_0^1\vp\biggl(\Tr_s\bigl[\gamma N_u\exp(-B_u^2)\bigr]-\Tr_s^{H^\bullet(Z,\mathcal{E}_{|_Z})}\bigr[\gamma N_Ve^{-\Omega^{H^\bullet(Z,\mathcal{E}_{|_Z})}}\bigr]\biggr)\,\frac{du}{u}\\
&+\sum_{j=-d}^{-1}\frac{C_{\gamma,j}}{j}+\Gamma'(1)\Bigl(C_{\gamma,0}-\vp\Tr_s^{H^\bullet(Z,\mathcal{E}_{|_Z})}\bigl[\gamma N_Ve^{-\Omega^{H^\bullet(Z,\mathcal{E}_{|_Z})}}\bigr]\Bigr).
\end{align*}
Moreover, by \cite[Thm. 4.11.2]{B13}, the torsion form lies in $\bigoplus\limits_{p\ge0}\mathfrak{A}^{p,p}(B)$ and satisfies the equation
\[\frac{\overline{\partial}\partial}{2\pi i}T_\gamma(\omega^M,h^\mathcal{E})=\ch_\gamma\bigl(H^\bullet(Z,\mathcal{E}_{|_Z}),h^{H^\bullet(Z,\mathcal{E}_{|_Z})}\bigr)-c_{\gamma,0}.\]
In the case of a Kähler fibration, by \cite[Thm. 2.10, Thm. 2.12]{M00}, $C_{\gamma,j}=0$ for $j<-1$, $c_{\gamma,j}=0$ for $j<0$ and one has the explicit identity
\[c_{\gamma,0}=\int_{Z^\gamma}\Td_\gamma(TZ,h^{TZ})\ch_\gamma(\mathcal{E},h^\mathcal{E}).\]
Here, $\ch_\gamma,\Td_\gamma$ are given as follows: For a $(q,q)$ matrix $A$, put
\[\Td(A):=\det\Bigl(\frac{A}{1-e^{-A}}\Bigr),\qquad\ctop(A):=\det A.\]
The genera associated to $\Td,\ctop$ are the Todd genus and the top Chern class. Let $(E,h^E)$ be an arbitrary Hermitian holomorphic vector bundle over a complex manifold $Z$ with a $\gamma$-equivariant structure and Chern connection $\nabla^E$. The distinct eigenvalues $1,e^{i\theta_1},\ldots,e^{i\theta_q} ~~(0<\theta_j <2\pi)$ of $\gamma$ are locally constant on $Z^\gamma$. Let $\{E^{\theta_j}\}_{0\le j\le q}$ with $\theta_0:=0$ be the corresponding eigenbundles with Hermitian metrics $h^{E^{\theta_j}}$ induced by $h^E$ and Chern connections $\nabla^{E^{\theta_j}}$. Set
\begin{align*}
&\ch_\gamma(E,h^E):=\Tr^E\Bigl[\gamma\exp\Bigl(\frac{-\Omega^E_{|_{Z^\gamma}}}{2\pi i}\Bigr)\Bigr],\\
&\Td_\gamma(E,h^E):=\Td \Bigl(\frac{-\Omega^{E^0}_{|_{Z^\gamma}}}{2\pi i}\Bigr)\prod_{j=1}^q\Bigl(\frac{\Td}{\ctop}\Bigr)\Bigl(\frac{-\Omega^{E^{\theta_j}}_{|_{{Z^\gamma}}}}{2\pi i}+i\theta_j\Bigr).
\end{align*}
These are closed differential forms on $Z^\gamma$ and their cohomology class does not depend on the metric. 
\section{Asymptotic Equivariant Torsion Forms}\label{sec 2}
Let $(\mathcal{L},h^\mathcal{L})$ be a Hermitian holomorphic line bundle on $M$ with a $\gamma$-equivariant structure. Let $\nabla^\mathcal{L}$ be the Chern connection of $(\mathcal{L},h^\mathcal{L})$. We assume that the $(1,1)$-form $i\Omega^\mathcal{L}$ is fibrewise positive, that is, for any $X\in T^{1,0}Z\setminus\{0\}$,
\begin{align}\label{pos}
\Omega^\mathcal{L}(X,\overline{X})>0.
\end{align}
For $p\in\N_0$, put $\mathcal{L}^p:=\mathcal{L}^{\otimes p}$. Now all the construction from Section \ref{sec 1} will be repeated here for $(\mathcal{E}\otimes\mathcal{L}^p,h^{\mathcal{E}\otimes\mathcal{L}^p})$ instead of $(\mathcal{E},h^\mathcal{E})$. The corresponding objects in this situation will then have the following adjusted notations:
\begin{align*}
B_{p,u}~\widehat{=}~&\text{rescaled Bismut superconnection},\quad B_p:=B_{p,1},\quad\overline{\partial}_p+\overline{\partial}_p^*:=B_p^{(0)},\\
\zeta_{\gamma,p}~\widehat{=}~&\text{corresponding zeta function.} 
\end{align*}
\begin{Remark}
In the preprint version \cite{P15}, the situation of only one fibration $M\rightarrow B$ like in this paper has been explored in great detail in \cite[Sec. 2]{P15}. That section did not appear in the final version \cite{P23} since it has been generalized and would be redundant (cf. \cite[Remark 1.4]{P23}). Thus, because our situation is simpler and we do not need the theory of Toeplitz operators, we will occasionally cite from \cite{P15} for more transparency.
\end{Remark}

The aim of this section is to prove Theorem \ref{Th1}. Subsection \ref{subsec 2.1} explains the technique of localization near the fixed-point manifold and replacing the manifold with the tangent space; Subsection \ref{subsec 2.2} provides the asymptotic expansion of the trace of the kernel for $u\in]0,1]$, whereas Subsection \ref{subsec 2.3} deals with the analysis for $u\ge1$.

\subsection{Localization near the Fixed-point Set}\label{subsec 2.1}
The localization technique in this subsection will be as in \cite[Sec. 3.4]{P23} (see also \cite[Ch. 1.6.2]{MMa07}), which was developed by Bismut-Lebeau (\cite[Sec. XIII b)]{BL91}).

Let $b_0\in B$ be a fixed base point. Because in this subsection we will work along $Z_{b_0}$, this fibre will be simply denoted by $Z$. In particular, $g^{T_\R Z_{b_0}}$ will be denoted by $g^{T_\R Z}$ as well as the corresponding Riemannian volume form. For the basis of $T_{\R,b_0}B$, we will also write $f_\alpha$ instead of $f_{\alpha,b_0}$.

By \cite[Thm. 1.5.8]{MMa07}, the operator $(\overline{\partial}_p+\overline{\partial}_p^*)^2$ has a spectral gap property, i.e. there exist a constant $C_\mathcal{L}>0$ depending on $\mathcal{L}$ and $\mu>0$ such that for its spectrum,
\begin{align*}
\Sp(\overline{\partial}_p+\overline{\partial}_p^*)^2\subset\{0\}\cup]p\mu-C_\mathcal{L},\infty[.
\end{align*}
By \cite[(3.61)]{P23} (see also \cite[Prop. 9.2]{B97}), one has $\Sp B^2_p=\Sp(\overline{\partial}_p+\overline{\partial}_p^*)^2$; therefore, $B^2_p$ has this property as well which will be needed in Subsection \ref{subsec 2.3}. Note that the spectral gap does not hold if the line bundle is only required to be semipositive (\cite{Do03}). Let $\Gamma$ be the contour indicated in the following figure:\\[1cm]
\begin{figure}[H]
\begin{center}
\begin{tikzpicture}
\draw[->] (-4,0) -- (3,0);
\draw[->] (-2,-1.8) -- (-2,1.8);
\draw[line width=1.2,-](-3,-1) -- (-3,1);	
\draw[line width=1.2,-](-3,-1) -- (3,-1);	
\draw[line width=1.2,-](-3,1) -- (3,1);	
\draw[,-{Latex[scale=1.2]}] (-2.5,1) -- (-2.6,1);
\draw[,-{Latex[scale=1.2]}] (-1.6,-1) -- (0,-1);
\filldraw (-2,0) circle (1pt);
\filldraw (-2,1) circle (1pt);
\filldraw (-2,-1) circle (1pt);
\filldraw (-3,0) circle (1pt);
\draw[] (3.3,0) node {$\R$};
\draw[] (-2,2.1) node {$i\R$};
\draw[] (-1.8,-0.25) node {$0$};
\draw[] (-2.2,1.25) node {$i$};
\draw[] (-2.35,-1.25) node {$-i$};
\draw[] (1.5,-0.7) node {$\Gamma$};
\draw[] (-3.5,-0.28) node {$-C_\mathcal{L}$};
\end{tikzpicture}
\end{center}
\caption{Contour $\Gamma$.}
\end{figure}
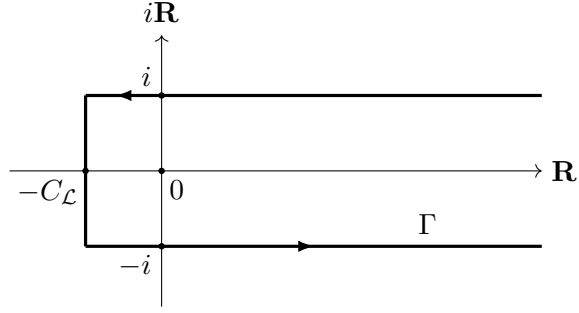
\noindent The resolvent $(\lambda-B_p^2)^{-1}$ exists for $\lambda\in\Gamma$ and by \cite[(3.65)]{P23}, there are $k,k'\in\N_0$ such that $\|(\lambda-B_p^2)^{-1}\|_\infty\le|\lambda|^kp^{k'}$, where $\|\cdot\|_\infty$ denotes the operator norm. For $\vr>0$, let $f_\vr:\R\rightarrow[0,1]$ be a smooth even function with $f_\vr(t)=\begin{cases}1~,~|t|<\frac{\vr}{2},\\0~,~|t|>\vr\end{cases}$. For $a\in\C$ and $u>0$, define the functions
\begin{align*}
F_{u,\vr}(a)&:=\frac{1}{\sqrt{2\pi}}\int_{-\infty}^\infty e^{is\sqrt{2}a}\exp(-s^2/2)f_\vr(\sqrt{u}s)\,ds,\\
G_{u,\vr}(a)&:=\frac{1}{\sqrt{2\pi}}\int_{-\infty}^\infty e^{is\sqrt{2}a}\exp(-s^2/2)(1-f_\vr(\sqrt{u}s))\,ds
\end{align*}
which are even holomorphic functions. Hence, there exist holomorphic functions $\tilde{F}_{u,\vr},\tilde{G}_{u,\vr}$ with $\tilde{F}_{u,\vr}(a^2)=F_{u,\vr}(a)$ and $\tilde{G}_{u,\vr}(a^2)=G_{u,\vr}(a)$. The restriction of $\tilde{F}_{u,\vr},\tilde{G}_{u,\vr}$ on $\R$ lie in the Schwartz space $\mathscr{S}(\R)$. Consequently, $\tilde{F}_{u,\vr}(B_p^2)=\int_\Gamma\tilde{F}_{u,\vr}(\lambda)(\lambda-B_p^2)^{-1}\,d\lambda$ is well-defined as well as $\tilde{G}_{u,\vr}(B_p^2)$ and for $t>0$ one has
\begin{align}\label{F+G=B}
\exp(-tB_p^2)=\tilde{F}_{u,\vr}(tB_p^2)+\tilde{G}_{u,\vr}(tB_p^2).
\end{align}
By the Schwartz kernel theorem, $\exp(-tB_p^2),\tilde{G}_{u,\vr}(tB_p^2)$ and $\tilde{F}_{u,\vr}(tB_p^2)$ are represented by smooth kernels $\bigl\langle z\bigm|\exp(-tB_p^2)\bigm|z'\bigr\rangle,\bigl\langle z\bigm|\tilde{G}_{u,\vr}(tB_p^2)\bigm|z'\bigr\rangle$ and $\bigl\langle z\bigm|\tilde{F}_{u,\vr}(tB_p^2)\bigm|z'\bigr\rangle$ with respect to $\dvol_{g^{T_\R Z}}(z')$. Define the vector bundle $\mathbf{E}_p$ over $Z$ by
\[\mathbf{E}_p:=\Lambda^\bullet T^*_{\R,b_0}B\otimes\Lambda^{0,\bullet}T^*Z\otimes\mathcal{E}\otimes\mathcal{L}^p\] 
where $\Lambda^\bullet T^*_{\R,b_0}B$ is a trivial bundle. $\mathbf{E}_p$ will be equipped with the Hermitian metric $h^{\mathbf{E}_p}$ induced by $g^{T_\R B},h^{\Lambda^{0,\bullet}T^*Z\otimes\mathcal{E}},h^\mathcal{L}$ and with connection $\nabla^{\mathbf{E}_p}$ induced by $\nabla^{T_\R B,LC},\nabla^{\Lambda^{0,\bullet}T^*Z\otimes\mathcal{E},LC},\nabla^\mathcal{L}$. Let $h^{\mathbf{E}_p\boxtimes\mathbf{E}^*_p}$ and $\nabla^{\mathbf{E}_p\boxtimes\mathbf{E}^*_p}$ be the induced metric and connection on $\mathbf{E}_p\boxtimes\mathbf{E}^*_p$. In \cite[Prop. 3.8]{P23}, it was shown that for any $m\in\N_0$ there exist $C>0$ and $N\in\N$ such that for any $u>0,p\in\N$,
\begin{align}\label{G Abschätzung}
\Bigl\|\bigl\langle\cdot\bigm|\tilde{G}_{\frac{u}{p},\vr}\Bigl(\frac{u}{p}B^2_p\Bigr)\bigm|\cdot\bigr\rangle\Bigr\|_{C^m(M\times_\pi M,\mathbf{E}_p\boxtimes\mathbf{E}^*_p)}\le Cp^N\exp\Bigl(-\frac{\vr^2p}{16u}\Bigr).	  
\end{align}
By finite propagation speed for solutions of hyperbolic equations (cf. \cite[Appx. D]{MMa07}), the map $z'\mapsto\langle z\mid\tilde{F}_{u,\vr}(uB^2_p)\mid z'\rangle$ vanishes on the complement of $B^Z_\vr(z)$ and depends for any $z\in Z$ only on the restriction of the operator $B^2_p$ to the ball $B^Z_\vr(z)$. In particular, if $\dist^Z$ denotes the distance function on $Z$,
\begin{align}\label{F=0}
\langle\gamma^{-1}z\mid\tilde{F}_{\frac{u}{p},\vr}(\frac{u}{p}B^2_p)\mid z\rangle=0,\quad\text{if}\quad\dist^Z(\gamma^{-1}z,z)\ge\vr. 
\end{align} 
Let $N_{Z^\gamma/Z}$ be the normal bundle along $Z^\gamma$ which will be identified with the orthogonal bundle of $TZ^\gamma$ in $TZ_{|{Z^\gamma}}$. We will identify $Z^\gamma$ with the set of zero sections. For $\ve>0$, let $V_\ve$ be the $\ve$-neighbourhood of $Z^\gamma$ in $N_{Z^\gamma/Z}$. The injectivity radius of $Z$ shall be denoted by $\inj^Z$. There exists $\ve_0\in]0,\inj^Z/4]$ such that if $\ve\in]0,2\ve_0]$, the map $\ N_{Z^\gamma/Z}\ni (z,V)\mapsto\exp^Z_z(V)$ is a diffeomorphism of $V_\ve$ on the tubular neighbourhood $U_\ve$ of $Z^\gamma$ in $Z$. In the sequel, we will identify $V_\ve$ and $U_\ve$. We now assume that $\vr\in]0,\ve_0]$ is small enough such that for $z\in Z$, 
\begin{align}\label{dist}
z\in U_{\ve_0},\quad\text{if}\quad\dist^Z(\gamma^{-1}z,z)\le\vr.
\end{align}
Such $\vr,\ve_0$ can be chosen uniformly for $b_0$ varying in a compact subset of $B$. By (\ref{F+G=B})-(\ref{dist}), the task of calculating the asymptotic of the kernel of $\gamma e^{-\frac{u}{p}B^2_p}$ as $p\rightarrow\infty$ is localized on $U_{\ve_0}$.

After the localization we replace the manifold with the tangent space. Therefore, fix $z_0\in Z$. Let $B_\ve^Z(z_0)\subset Z$ and $B_\ve^{T_{\R,z_0}Z}(0)\subset T_{\R,z_0}Z$ be the open balls with radius $\ve>0$ and center $z_0$ and $0$ respectively. For $\ve$ sufficiently small, the exponential map $\exp_{z_0}^Z:B_\ve^{T_{\R,z_0}Z}(0)\rightarrow B_\ve^Z(z_0),~V\mapsto\exp_{z_0}^ZV$ is a diffeomorphism. We will identify  $B_\ve^Z(z_0)$ and $B_\ve^{T_{\R,z_0}Z}(0)$ by this. For $V\in B_\ve^{T_{\R,z_0}Z}(0)$, we identify $(\mathbf{E}_{p,V},h_V^{\mathbf{E}_p})$ with $(\mathbf{E}_{p,z_0},h_{z_0}^{\mathbf{E}_p})$ by parallel transport along the geodesic ray $[0,1]\ni t\mapsto tV$ with respect to the connection $\nabla^{\mathbf{E}_p}$. Let $\overline{\vt}_1,\vt^\mathcal{L}$ be the corresponding connection forms of $\overline{\nabla}_1,\nabla^\mathcal{L}$ in this trivialisation.

For convenience, as we are frequently working on $T_{\R,z_0}Z$, put
\[Z_0:=T_{\R,z_0}Z.\]
Let $g^{T_\R Z_0}$ be a Riemannian metric on $Z_0$ given by
\[g^{T_\R Z_0}:=\begin{cases}
g^{T_\R Z}&\text{on~~}B^{T_{\R,z_0}Z}_{2\ve_0}(0),\\
g_{z_0}^{T_\R Z}&\text{on~~}T_{\R,z_0}Z\setminus B^{T_{\R,z_0}Z}_{4\ve_0}(0)\end{cases}\]
and $\nabla^{T_\R Z_0,LC}$ be the Levi-Civita connection of $(Z_0,g^{T_\R Z_0})$. Let $\kappa_{z_0}\in C^\infty(Z_0)$ be the positive function with $\kappa_{z_0}(0)=1$ and
\[{\dvol_{g^{T_\R Z_0}}}_{|_V}=\kappa_{z_0}(V){\dvol_{g_{z_0}^{T_\R Z}}}_{|_V}.\]

Let $d_U$ be the ordinary differentiation operator on $T_{z_0}Z$ in direction $U$. Let $\rho:\R\rightarrow[0,1]$ be a smooth even function such that $\rho(t)=\begin{cases}
1~,~|t|<2,\\
0~,~|t|>4
\end{cases}$. We will now consider $\mathbf{E}_{p,z_0}=\Lambda^\bullet T^*_{\R,b_0}B\otimes(\Lambda^{0,\bullet}T^*Z\otimes\mathcal{E}\otimes\mathcal{L}^p)_{z_0}$ as a trivial vector bundle on $T_{z_0}Z$ which will be equipped with the Hermitian connection
\[\nabla^{\mathbf{E}_{p,z_0}}:=d+\rho\Bigl(\frac{\|V\|}{\ve_0}\Bigr)(p\vt^\mathcal{L}+\overline{\vt}_1)_{|_V}.\]
Let $\Delta^{\mathbf{E}_{p,z_0}}$ be the Bochner Laplacian associated with $\nabla^{\mathbf{E}_{p,z_0}}$ and $g^{T_\R Z_0}$. Let $\{\tilde{e}_i\}_i$ be the parallel transport of $\{e_{i,z_0}\}_i$ with respect to $\nabla^{T_\R Z_0,LC}$ along radial geodesics. Let $\Phi^\mathcal{E}\in\Gamma(Z_0,\End(\mathbf{E}_{p,z_0}))$ be given by
\begin{align*}
\Phi^\mathcal{E}:=&~\frac{s^Z}{8}+\frac{1}{4}\sum_{i,j}c(\tilde{e}_i)c(\tilde{e}_j)\Bigl(\Omega^\mathcal{E}+\frac{1}{2}\Tr\Omega^{TZ}\Bigr)(\tilde{e}_i,\tilde{e}_j)+\frac{1}{\sqrt{2}}\sum_{i,\alpha}c(\tilde{e}_i)f^\alpha\Bigl(\Omega^\mathcal{E}+\frac{1}{2}\Tr\Omega^{TZ}\Bigr)(\tilde{e}_i,f_\alpha)\\
&+\sum_{\alpha,\beta}\frac{f^\alpha f^\beta}{2}\Bigl(\Omega^\mathcal{E}+\frac{1}{2}\Tr\Omega^{TZ}\Bigr)(f_\alpha,f_\beta)+\left(\overline{\partial}^M\partial^Mi\omega^M\right)^c+\frac{1}{16}\left\|(\overline{\partial}^Z-\partial^Z)\omega^Z\right\|_{\Lambda^\bullet T^*_\R Z}.
\end{align*}
Define the operator
\begin{align*}
M_{p,z_0}:=&~\frac{1}{2}\Delta^{\mathbf{E}_{p,z_0}}+\rho\Bigl(\frac{\|V\|}{\ve_0}\Bigr)\Phi^\mathcal{E}_{|_V}+p\rho\Bigl(\frac{\|V\|}{\ve_0}\Bigr)\biggl(\frac{1}{4}\sum_{i,j}c(\tilde{e}_i)c(\tilde{e}_j)\Omega^\mathcal{L}(\tilde{e}_i,\tilde{e}_j)\\
&+\frac{1}{\sqrt{2}}\sum_{i,\alpha}c(\tilde{e}_i)f^\alpha\Omega^\mathcal{L}(\tilde{e}_i,f_\alpha)+\sum_{\alpha,\beta}\frac{f^\alpha f^\beta}{2}\Omega^\mathcal{L}(f_\alpha,f_\beta)\biggr)_{|_V}
\end{align*}
which is a second order elliptic differential operator acting on $\Gamma(Z_0,\mathbf{E}_{p,z_0})$ and coincides with $B^2_p$ over $B_{2\ve_0}^{T_{z_0}Z}(0)$ by the Bismut-Lichnerowicz formula (\ref{Lichnerowicz}). Let $\bigl\langle V\bigm|e^{-M_{p,z_0}}\bigm|V'\bigr\rangle$ be the smooth kernel of $M_{p,z_0}$ with respect to $\dvol_{g^{T_\R Z_0}}(V')$. Observe that if $V\in N_{Z^\gamma/Z,\R,z_0}$ with $\|V\|\le\ve_0$ and if $z'\in Z$ with $\dist^Z(V,z')\le\vr$, then $\dist^Z(z,z')\le2\ve_0$. In particular, $z'$ is represented by $V'\in Z_0$ such that $\|V'\|\le2\ve_0$, so that $\rho(V')=1$.

By finite propagation speed for solutions of hyperbolic equations, we have for $V\in N_{Z^\gamma/Z,\R,z_0}$ with $\|V\|\le\ve_0$,
\begin{align}\label{äqwelle}
\langle\gamma^{-1}V\mid\tilde{F}_{\frac{u}{p},\vr}(\frac{u}{p}B^2_p)\mid V\rangle=\langle\gamma^{-1}V\mid\tilde{F}_{\frac{u}{p},\vr}(\frac{u}{p}M_{p,z_0})\mid V\rangle.
\end{align}

\subsection[Asymptotic Expansion for \texorpdfstring{$0<u\le1$}{0<u=<1}]{Asymptotic Expansion for \texorpdfstring{$\boldsymbol{0<u\le1}$}{0<u=<1}}\label{subsec 2.2}
We now do a change of the parameter. Set $t:=\frac{1}{\sqrt{p}}\in ]0,1]$. For $s\in\Gamma(Z_0,\mathbf{E}_{p,z_0})$ and $V\in Z_0$, set
\begin{align}\label{rescale}
&(S_ts)_{|_V}:=s_{|_{V/t}},\notag\\
&L^t_{z_0}:=t^2S_t^{-1}\kappa_{z_0}^{1/2}M_{p,z_0}\kappa_{z_0}^{-1/2}S_t.
\end{align}
If $\mathrm{Op}(Z_0,\mathbf{E}_{p,z_0})$ denotes the set of scalar differential operators acting on $\Gamma(Z_0,\mathbf{E}_{p,z_0})$, then 
\[L_{z_0}^t\in\End(\mathbf{E}_{p,z_0})\otimes\mathrm{Op}(Z_0,\mathbf{E}_{p,z_0}).\]
Let $\bigl\langle V\bigm|e^{-L^t_{z_0}}\bigm|V'\bigr\rangle$ be the smooth kernel of the operator $e^{-L^t_{z_0}}$ with respect to $\dvol_{g_{z_0}^{T_\R Z}}(V')$. Denote by $\pi_{\scalebox{0.6}{$\displaystyle{T_\R Z}$}}: T_\R Z\rightarrow M$ the submersion of the vector bundle $T_\R Z$ over $M$. When $z_0$ varies on $Z$,~$\bigl\langle\cdot\bigm|e^{-L^t}\bigm|\cdot\bigr\rangle$ can be viewed as a section of $\pi_{\scalebox{0.6}{$\displaystyle{T_\R Z}$}}^*\End(\mathbf{E}_p)$ over $T_\R Z\times_{\pi_{\scalebox{0.6}{$\displaystyle{T_\R Z}$}}}T_\R Z$. Similar for $M_{p,z_0}$, we write $M_p$ when $b\in B, z_0\in Z_b$ vary.

Let $\dot\Omega^{Z,\mathcal{L}}\in\Gamma(M,\End(T^{1,0}Z))$ be the Hermitian matrix defined by, if, $X,Y\in \Gamma(M,T^{1,0}Z)$, 
\[\Omega^\mathcal{L}(X,\overline{Y})=:h^{T^{1,0}Z}(\dot\Omega^{Z,\mathcal{L}}X,Y).\]
Then $\dot\Omega^{Z,\mathcal{L}}$ is positive definite by the assumption on positivity of $\mathcal{L}$. This endomorphism extends to $\End(T_\C Z)$ via $\dot\Omega^{Z,\mathcal{L}}\overline{w}_i=-\overline{\dot\Omega^{Z,\mathcal{L}}w_i}$. For $u>0$, $z\in Z^\gamma$ and $V\in N_{Z^\gamma/Z,\R,z}$, put
\begin{align*}
&\Omega_u:=u\sum_{k,l}\Omega^\mathcal{L}(w_k,\overline{w}_l)\overline{w}^l\wedge\iota_{\overline{w}^k}+\sqrt{\frac{u}{2}}\sum_{i,\alpha}c(e_i)f^\alpha\Omega^\mathcal{L}(e_i,f_\alpha)+\sum_{\alpha,\beta}\frac{f^\alpha f^\beta}{2}\Omega^\mathcal{L}(f_\alpha,f_\beta),\\
&I_u(z,V):=\frac{1}{(2\pi)^n}\exp(-\Omega_{u,(z,V)})\frac{\det\dot{\Omega}^\mathcal{L}_{(z,V)}}{\det\bigl(\id_{T^{1,0}Z}-\exp(-u\dot{\Omega}^\mathcal{L}_{(z,V)})\bigr)},\\
&e_{\gamma,u}(z,V):=\exp\Bigl(
\begin{aligned}[t]
&-g^{T_\C Z}\Bigl(\frac{\dot{\Omega}^\mathcal{L}_{(z,V)}/2}{\tanh(u\dot{\Omega}^\mathcal{L}_{(z,V)}/2)}(z,V),(z,V)\Bigr)\\
&+g^{T_\C Z}\Bigl(\frac{\dot{\Omega}^\mathcal{L}_{(z,V)}/2}{\sinh(u\dot{\Omega}^\mathcal{L}_{(z,V)}/2)}e^{u\dot{\Omega}^\mathcal{L}_{(z,V)}/2}(z,\gamma^{-1}V),(z,V)\Bigr)\Bigr).
\end{aligned}
\end{align*}
Let $\dvol_{g^{T_\R Z^\gamma}}$ and $\dvol_{g^{N_{Z^\gamma/Z,\R}}}$ be the  volume forms on $TZ^\gamma$ and $N_{Z^\gamma/Z}$ induced by $h^{TZ}$. Let $\hat{\kappa}\in C^\infty(V_{\ve_0})$ be the positive function defined by
\begin{align}\label{kappa hat}
{\dvol_{g^{T_\R Z}}}_{|_{(z,V)}}=\hat{\kappa}(z,V){\dvol_{g^{T_\R Z^\gamma}}}_{|_z}{\dvol_{g^{N_{Z^\gamma/Z,\R}}}}_{|_V}.
\end{align}
Then $\hat{\kappa}(z,0)=1$ and $\hat{\kappa}(\cdot,V)_{|_{Z^\gamma}}\equiv1$.

Let $\{Z^\gamma_l\}_{l\in L}$ be the connected components of $Z^\gamma$ with $n_{\gamma,l}:=\dim_\C Z^\gamma_l$. The action of $\gamma^\mathcal{L}$ on $\mathcal{L}$ is given by multiplication with $e^{i\theta}$ which is locally constant on $M^\gamma$ and hence constant on the components $Z^\gamma_l$.
\begin{Lemma}\label{äqkern}
Let $m\in\N_0$. There exists $\delta>0$ such that as $p\rightarrow\infty$, uniformly as u varies in a compact subset of $\R_{>0}$, the following asymptotic for the $C^m(B)$-norm holds:
\begin{align*}
&\psi_{1/\sqrt{p}}\Tr_s\Bigl[\gamma N_{u/p}\exp\bigl(-B^2_{p,u/p}\bigr)\Bigr]\\
=~&\rk(\mathcal{E})\sum_{l\in L}p^{n_{\gamma,l}}e^{ip\theta}_{|_{Z_l^\gamma}}\int_{z\in Z_l^\gamma}\int\limits_{\substack{V\in N_{Z_l^\gamma/Z,\R,z}}}\begin{aligned}[t] &e_{\gamma,u}(z,V)\Tr_s\Bigl[\gamma^{\mathcal{E}}N_uI_u(z,V)\Bigr]\\&\qquad\cdot\dvol_{g^{N_{Z^\gamma/Z,\R}}}(V)\dvol_{g^{T_\R Z^\gamma}}(z)+o(p^{n_{\gamma,l}-\delta}).
\end{aligned}
\end{align*}
\end{Lemma}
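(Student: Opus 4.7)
The plan is to adapt the Bismut--Lebeau localization technique of \cite[Sec.~3]{P23} to the equivariant setting, tracking the $\gamma$-action throughout. Applying (\ref{F+G=B}) with $t = u/p$, the estimate (\ref{G Abschätzung}) shows that the contribution of $\tilde{G}_{u/p,\varrho}(\tfrac{u}{p}B_p^2)$ to the supertrace is $O(p^N e^{-\varrho^2 p/(16u)})$, negligible to all polynomial orders. It therefore suffices to analyse $\tilde{F}_{u/p,\varrho}(\tfrac{u}{p}B_p^2)$; by finite propagation speed together with (\ref{F=0}) and (\ref{dist}), its kernel is supported on the tubular neighbourhood $U_{\varepsilon_0}$ of $Z^\gamma$, and (\ref{äqwelle}) allows the replacement of $B_p^2$ by the model operator $M_{p,z_0}$ on $Z_0 = T_{\mathbb{R},z_0}Z$ for each $z_0 \in Z^\gamma$.

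Next, apply the rescaling (\ref{rescale}) with $t = 1/\sqrt{p}$, producing the family $L^t_{z_0}$. A Taylor expansion in $t$ shows that $L^t_{z_0}$ extends smoothly down to $t = 0$, with limit $L^0_{z_0}$ a generalized harmonic oscillator on $N_{Z^\gamma/Z,\mathbb{R},z_0}$: the quadratic part is determined by $\dot\Omega^{Z,\mathcal{L}}$, the zeroth-order curvature pieces contribute the nilpotent $\Omega_u$-term, and the tangential $Z^\gamma_l$-directions decouple trivially. Its heat kernel is computed in closed form by the Mehler formula; evaluating at the twisted pair $(\gamma^{-1}V, V)$ reproduces the scalar factor $I_u(z,V)\,e_{\gamma,u}(z,V)$, where the shift $V \mapsto \gamma^{-1}V$ in the second Gaussian is exactly what $e_{\gamma,u}$ encodes.

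The main technical step --- and the chief obstacle --- is to establish $\bigl\langle V \bigm| e^{-u L^t_{z_0}} \bigm| V'\bigr\rangle \to \bigl\langle V \bigm| e^{-u L^0_{z_0}} \bigm| V'\bigr\rangle$ in the $C^m$-topology on compact subsets as $t \to 0$, uniformly for $z_0$ in a compact subset of $Z^\gamma$ and with Gaussian decay in $|V|, |V'|$ uniform in $t$. This rests on the spectral gap $\Sp B_p^2 \subset \{0\} \cup\, ]p\mu - C_\mathcal{L}, \infty[\,$, the resolvent estimates $\|(\lambda - B_p^2)^{-1}\|_\infty \le |\lambda|^k p^{k'}$, Sobolev embedding along the fibre, and the uniform ellipticity of $L^t_{z_0}$; it parallels the functional-analytic development of \cite[Sec.~3.5--3.7]{P23}, now carried out equivariantly. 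The explicit rate of this convergence is what ultimately produces the $o(p^{n_{\gamma,l}-\delta})$ error term.

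Finally assemble: the Jacobian $t^{-\dim_\mathbb{R} Z_0}$ from $S_t$ in (\ref{rescale}), together with $\psi_{1/\sqrt{p}}$ and the volume factorization (\ref{kappa hat}), converts the fibre integral over $U_{\varepsilon_0}$ into a sum over components $l \in L$ of an integral $\int_{Z^\gamma_l}$ paired with a Gaussian-type normal integral against $e_{\gamma,u} I_u$, producing the weight $p^{n_{\gamma,l}}$. Since $\theta$ is constant on each $Z^\gamma_l$, the $\mathcal{L}^p$-factor of $\gamma$ contributes the scalar $e^{ip\theta}_{|_{Z^\gamma_l}}$, while the remaining supertrace over $\Lambda^\bullet T^*_{\mathbb{R},b_0}B \otimes \Lambda^{0,\bullet}T^*Z \otimes \mathcal{E}$ gives the factor $\rk(\mathcal{E})\,\Tr_s[\gamma^{\mathcal{E}} N_u I_u(z,V)]$. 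Summing over $l$ yields the stated asymptotic.
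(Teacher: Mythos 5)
Your proposal follows the same Bismut--Lebeau localization and rescaling scheme as the paper: the $F/G$ split via (\ref{F+G=B}) and (\ref{G Abschätzung}), finite-propagation localization near $Z^\gamma$, replacement of $B_p^2$ by $M_{p,z_0}$ via (\ref{äqwelle}), the rescaling (\ref{rescale}) with $t=1/\sqrt{p}$, the Mehler formula for the limit harmonic oscillator evaluated off-diagonal at $(\gamma^{-1}V,V)$, and finally the change of variables $V\mapsto tV$ in the normal directions together with the $\mathcal{L}^p$-action producing $p^{n_{\gamma,l}}e^{ip\theta}$. The one inessential imprecision is your remark that the limit operator lives on the normal bundle and that the tangential directions ``decouple trivially''---the model operator $L^0_u$ acts on all of $T_{\R,z_0}Z$ and the Mehler kernel need not block-decompose, but this does not matter because the off-diagonal evaluation is taken at points with vanishing tangential component, exactly as in the paper's equation (\ref{äq2}).
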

\begin{proof} Using $\psi_{1/\sqrt{p}}N_{u/p}=N_u$ and $\psi_{1/\sqrt{p}}e^{-B^2_{p,u/p}}=\psi_{1/\sqrt{u}}e^{-\frac{u}{p}B^2_p}$ followed by (\ref{F+G=B}), (\ref{G Abschätzung}) and the discussion after it, where $U_{\ve_0}, V_{\ve_0}$ are identified, we find for any $k\in\N$, as $p\rightarrow\infty$,
\begin{align*}
&\psi_{1/\sqrt{p}}\Tr_s\Bigl[\gamma N_{u/p}\exp\bigl(-B^2_{p,u/p}\bigr)\Bigr]\\
=&\int\limits_Z\Tr_s\Bigl[\gamma N_u\psi_{1/\sqrt{u}}\bigl\langle\gamma^{-1}z\bigm|\exp\Bigl(-\frac{u}{p}B^2_p\Bigr)\bigm|z\bigr\rangle\Bigr]\,\dvol_{g^{T_\R Z}}(z)\\
=&\int\limits_{V_{\ve_0}}\Tr_s\Bigl[\gamma N_u\psi_{1/\sqrt{u}}\bigl\langle\gamma^{-1}(z,V)\bigm|\tilde{F}_{\frac{u}{p},\vr}\Bigl(-\frac{u}{p}B^2_p\Bigr)\bigm|(z,V)\bigr\rangle\Bigr]\,\dvol_{g^{T_\R Z}}(z,V)+o(p^{-k}).
\end{align*}
Using (\ref{äqwelle}) and (\ref{kappa hat}) we insert
\begin{align*}
&\int\limits_{V_{\ve_0}}\Tr_s\Bigl[\gamma N_u\psi_{1/\sqrt{u}}\bigl\langle\gamma^{-1}(z,V)\bigm|\tilde{F}_{\frac{u}{p},\vr}\Bigl(-\frac{u}{p}B^2_p\Bigr)\bigm|(z,V)\bigr\rangle\Bigr]\dvol_{g^{T_\R Z}}(z,V)\\
=&\int\limits_{V_{\ve_0}}\Tr_s\Bigl[\gamma N_u\psi_{1/\sqrt{u}}\bigl\langle\gamma^{-1}(z,V)\bigm|\tilde{F}_{\frac{u}{p},\vr}\Bigl(-\frac{u}{p}M_p\Bigr)\bigm|(z,V)\bigr\rangle\Bigr]\hat{\kappa}(z,V)\,\dvol_{g^{N_{Z^\gamma/Z,\R}}}(V)\dvol_{g^{T_\R Z^\gamma}}(z).
\end{align*}
Because both $B^2_p$ and $M_p$ have the same structure, (\ref{G Abschätzung}) is also true for $M_p$. Furthermore, the following identity holds, 
\[\bigl\langle(z,V)\bigm|e^{-\frac{u}pM_p}\bigm|(z,V')\bigr\rangle=p^n\bigl\langle(z,V/t)\bigm|e^{-uL^t}\bigm|(z,V'/t)\bigr\rangle\kappa^{-1/2}_z(V)\kappa^{-1/2}_z(V').\]
Thus, for any $k\in \N$, we conclude
\begin{align}\label{äq0}
&\psi_{1/\sqrt{p}}\Tr_s\Bigl[\gamma N_{u/p}\exp\bigl(-B^2_{p,u/p}\bigr)\Bigr]\nonumber\\
=~&\int_{z\in Z^\gamma}\int\limits_{\substack{V\in N_{Z^\gamma/Z,\R,z}\\\| V \|\le\ve_0}}\Tr_s\Bigl[\gamma N_u\psi_{1/\sqrt{u}}\Bigl\langle(z,\gamma^{-1}V)\Bigm|\exp\Bigl(-\frac{u}{p}M_p\Bigr)\Bigm|(z,V)\Bigr\rangle\Bigr]\nonumber\\
&\quad\cdot\hat{\kappa}(z,V)\,\dvol_{g^{N_{Z^\gamma/Z,\R}}}(V)\dvol_{g^{T_\R Z^\gamma}}(z)+o(p^{-k})\nonumber\\
=~&p^n\int_{z\in Z^\gamma}\int\limits_{\substack{V\in N_{Z^\gamma/Z,\R,z}\\\|V\|\le\ve_0}}\Tr_s\Bigl[\gamma N_u\psi_{1/\sqrt{u}}\Bigl\langle(z,\gamma^{-1}V/t)\Bigm|e^{-uL^t}\Bigm|(z,V/t)\Bigr\rangle\Bigr]\nonumber\\
&\quad\cdot\kappa_z^{-1/2}(\gamma^{-1}V)\kappa_z^{-1/2}(V)\hat{\kappa}(z,V)\,\dvol_{g^{N_{Z^\gamma/Z,\R}}}(V)\dvol_{g^{T_\R Z^\gamma}}(z)+o(p^{-k})\nonumber\\
=~&\sum_{l\in L}p^{n_{\gamma,l}}\int_{z\in Z_l^\gamma}\int\limits_{\substack{V\in N_{Z_l^\gamma/Z,\R,z}\\\|tV\|\le\ve_0}}\Tr_s\Bigl[\gamma N_u\psi_{1/\sqrt{u}}\Bigl\langle (z,\gamma^{-1}V)\Bigm|e^{-uL^t}\Bigm|(z,V)\Bigr\rangle\Bigr]\nonumber\\
&\quad\cdot\kappa_z^{-1/2}(\gamma^{-1}tV)\kappa_z^{-1/2}(tV)\hat{\kappa}(z,tV)\,\dvol_{g^{N_{Z^\gamma/Z,\R}}}(V)\dvol_{g^{T_\R Z^\gamma}}(z)+o(p^{-k}),
\end{align}
where in the last step we used the transformation $V\mapsto tV$. For $u>0$, let $L^0_u$ be the harmonic oscillator given by
\[L^0_u:=\frac{u}{2}\sum_{i=1}^{2n}\Bigl(d+\frac{1}{2}g^{T_\C Z}\bigl(\dot{\Omega}_z^{Z,\mathcal{L}}V,e_i\bigr)\Bigr)^2+\Omega_u(z)-\frac{u}{2}\Tr\dot{\Omega}_z^{T,\mathcal{L}}.\]
By \cite[(3.127),(3.138)-(3.142)]{P23} (or  \cite[(2.85)-(2.87)]{P15}), it follows that 
\begin{align}\label{äq1}
\psi_{1/\sqrt{u}}\Bigl\langle(z,\gamma^{-1}V)\Bigm|e^{-uL^t_z}\Bigm|(z,V)\Bigr\rangle=\Bigl\langle(z,\gamma^{-1}V)\Bigm|e^{-L^0_u}\Bigm|(z,V)\Bigr\rangle+O(p^{-1/(4n+2)})
\end{align}
for the $C^m$-norm on ${\Gamma(M\times_\pi M,\mathbf{E}_p\boxtimes\mathbf{E}^*_p)}$.
Since $\gamma$ is an isometry, the formula for the heat kernel of a harmonic oscillator (cf. \cite[E.2.4]{MMa07}) gives
\begin{align}\label{äq2}
&\Bigl\langle(z,\gamma^{-1}V)\Bigm|e^{-L^0_u}\Bigm|(z,V)\Bigr\rangle= e_{\gamma,u}(z,V)\cdot I_u(z,V)\otimes\id_{\mathcal{E}_{(z,V)}}.
\end{align}
By (\ref{äq0}), (\ref{äq1}), (\ref{äq2}), the Taylor expansion of $\kappa_z,\hat{\kappa}$ and because $e_{\gamma,u}(z,V)$ decays exponentially as $\|V\|\rightarrow\infty$, we conclude the claim.
\end{proof}
\begin{Remark} The positivity assumption is not needed when we use the convention, that if an eigenvalue of $\dot{\Omega}^{Z,\mathcal{L}}$ is zero, then its contribution to $e_{\gamma,u}(z,V)I_u(z,V)\otimes\id_{\mathcal{E}_{(z,V)}}$ is $X\mapsto\frac{1}{2\pi u}e^{-\frac{1}{2u}\|\gamma^{-1}X-X\|^2}$.
\end{Remark}
The function $I_u(z,V)$ has an asymptotic expansion as $u\searrow0$ (\cite[(3.145)]{P23}) and because $e_{\gamma,u}(z,V)$ is analytic in $u=0,$ we find sections $A^{[j]}_\gamma\in\Gamma(Z,\End(\Lambda^\bullet T^*_{\R,{b_0}}B\otimes\Lambda^{0,\bullet}T^*Z))$ for $j\ge-d$ such that as $u\searrow0$,
\[\bigl(e_{\gamma,u}\cdot I_u\bigr)(z,V)=\sum_{j=-d}^kA^{[j]}_\gamma(z,V)u^j+O(u^{k+1}).\]
Because $Z$ is compact, $L$ is finite and we can set
\[n_\gamma:=\max_{l\in L}n_{\gamma,l},\quad L_0:=\{l\in L\mid n_{\gamma,l}=n_\gamma\}.\]
Put $A^{[-d-1]}_\gamma:=0$ and for $j\ge-d-1 $, set
\begin{align*}
&\underline{b}_{\gamma,j,l}:=\int_{z\in Z^\gamma_l}\int\limits_{\substack{V\in N_{Z_l^\gamma/Z,\R,z}}}\Tr_s\Bigl[\gamma^{\mathcal E}\bigl(N_VA_\gamma^{[j]}(z,V)-i\omega^HA_\gamma^{[j+1]}(z,V)\bigr)\Bigr]\\
&\hspace{11em}\cdot\dvol_{g^{N_{Z^\gamma/Z,\R}}}(V)\dvol_{g^{T_\R Z^\gamma}}(z),\\
&b_{\gamma,j,e^{ip\theta}}:=\sum_{l\in L_0}e^{ip\theta}_{|_{Z_l^\gamma}} \cdot \underline{b}_{\gamma,j,l}.
\end{align*}
Then for any $k\in\N_0$, as $u\searrow0$,
\begin{align}\label{äq3}
&\sum_{l\in L_0}e^{ip\theta}_{|_{Z_l^\gamma}}\int_{z\in Z^\gamma_l}\int\limits_{\substack{V\in N_{Z_l^\gamma/Z,\R,z}}}e_{\gamma,u}(z,V)\Tr_s\Bigl[\gamma^{\mathcal{E}}N_uI_u(z,V)\Bigr]\,\dvol_{g^{N_{Z^\gamma/Z,\R}}}(V)\dvol_{g^{T_\R Z^\gamma}}(z)\notag\\
=&\sum_{j=-d-1}^kb_{\gamma,j,e^{ip\theta}}u^j+O(u^{k+1}).
\end{align}
\begin{Lemma}\label{äqcor}
There exist forms $\tilde{b}_{\gamma,p,j}\in\mathfrak{A}^\bullet(B)$ such that for any $k,m\in\N_0$, there exists $C>0$ such that for any $u\in]0,1]$ and $p\in\N$,
\[\Bigl\|p^{-n_\gamma}\psi_{1/\sqrt{p}}\Tr_s\Bigl[\gamma N_{u/p}\exp\bigl(-B^2_{p,u/p}\bigr)\Bigr]-\sum_{j=-d-1}^k\tilde{b}_{\gamma,p,j}u^j\Bigr\|_{C^m(B,\Lambda^\bullet T^*_\C B)}\le Cu^{k+1}.\]
Moreover, as $p\rightarrow\infty$ for any $j\ge-n_\gamma$,
\[\tilde{b}_{\gamma,p,j}=\rk(\mathcal{E}) b_{\gamma,j,e^{ip\theta}}+O\Bigl(\frac{1}{\sqrt{p}}\Bigr),\]
where the convergence is in the $C^\infty$ topology on $B$.
\end{Lemma}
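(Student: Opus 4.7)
The plan is to combine the localization and rescaling from the proof of Lemma \ref{äqkern} with a uniform-in-$p$ small-$u$ asymptotic expansion of the integrand, keeping the contributions from \emph{all} components $Z_l^\gamma$ rather than only the maximal-dimensional ones. Starting from the analogue of (\ref{äq0}) without discarding $l \notin L_0$, one writes, with $t = 1/\sqrt{p}$,
\begin{align*}
p^{-n_\gamma}\psi_{1/\sqrt{p}}\Tr_s[\gamma N_{u/p}\exp(-B^2_{p,u/p})] = \sum_{l \in L} p^{n_{\gamma,l}-n_\gamma}e^{ip\theta}_{|_{Z_l^\gamma}} J_l(u,t) + o(p^{-k}),
\end{align*}
where $J_l(u,t)$ is the integral on $Z_l^\gamma$ of the expression $\Tr_s[\gamma N_u \psi_{1/\sqrt{u}}\langle(z,\gamma^{-1}V)|e^{-uL^t}|(z,V)\rangle]$ paired against the Jacobian factors $\kappa_z^{-1/2}(\gamma^{-1}tV)\kappa_z^{-1/2}(tV)\hat{\kappa}(z,tV)$. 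I would then define $\tilde{b}_{\gamma,p,j}$ as the $u^j$-coefficient in the resulting joint expansion.

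For the uniform bound, the key is to show that each $J_l(u,t)$ admits a small-$u$ asymptotic expansion with remainder uniform in $t \in (0,1]$. The limiting harmonic oscillator kernel $\langle \cdot | e^{-L^0_u} | \cdot \rangle$ has an explicit small-$u$ expansion as analyzed in (\ref{äq2}) and the discussion preceding (\ref{äq3}), producing the sections $A_\gamma^{[j]}$. The corresponding expansion for $\psi_{1/\sqrt{u}}\langle \cdot | e^{-uL^t} | \cdot \rangle$ should then follow from a Duhamel-type estimate on $e^{-uL^t} - e^{-uL^0_u}$, exploiting that after the rescaling (\ref{rescale}) the coefficients of $L^t$ admit a smooth expansion in $t$ on compact subsets of $Z_0$ and that the spectral gap of $B^2_p$ transfers to $L^t$. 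Integrating this expansion against the Jacobian factors, with the unbounded integration controlled by the exponential decay of $e_{\gamma,u}(z,V)$ in $\|V\|$, then yields the uniform small-$u$ expansion of $J_l$ and hence the first claim of the lemma.

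For the second claim, observe that for $j \ge -n_\gamma$ the components with $n_{\gamma,l} < n_\gamma$ contribute terms of size at most $p^{-1} = o(p^{-1/2})$ to $\tilde{b}_{\gamma,p,j}$. On components $l \in L_0$, the heat kernel convergence (\ref{äq1}) together with (\ref{äq2}) passes to the $u^j$-coefficient of $J_l(u,t)$; the residual correction comes from the Taylor expansion of the Jacobian $\kappa_z^{-1/2}(\gamma^{-1}tV)\kappa_z^{-1/2}(tV)\hat{\kappa}(z,tV)$ around $t=0$, whose first correction is of order $t = p^{-1/2}$. Comparing with the definition of $b_{\gamma,j,e^{ip\theta}}$ via (\ref{äq3}) then produces $\tilde{b}_{\gamma,p,j} = \rk(\mathcal{E})\, b_{\gamma,j,e^{ip\theta}} + O(p^{-1/2})$ in the $C^\infty$ topology on $B$.

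The main technical obstacle is precisely the uniformity in $p$ of the $u \to 0$ expansion: a direct application of Theorem \ref{asymp zeta}.a) to $\mathcal{E}\otimes\mathcal{L}^p$ would give an expansion whose constants in the $O(u^{k+1})$ remainder depend on $p$, and the substitution $u \mapsto u/p$ converts these into unusable bounds. The resolution is to carry out the asymptotic analysis entirely in the rescaled picture, so that $L^t$ has controlled $t$-dependence and the heat kernel remainder can be bounded by Duhamel's formula combined with the Sobolev estimates developed in \cite[Sec. 3]{P23}; this is where the positivity assumption (\ref{pos}) and the associated spectral gap enter in an essential way.
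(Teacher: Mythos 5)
Your setup is correct: you correctly start from the analogue of (\ref{äq0}) keeping all components, and your treatment of the second statement (Jacobian Taylor expansion giving $O(p^{-1/2})$, subleading components giving $o(p^{-1/2})$) matches the paper. The genuine gap is in your mechanism for obtaining the first statement, i.e.\ a small-$u$ expansion of $\psi_{1/\sqrt{u}}\langle\cdot\,|\,e^{-uL^t}\,|\,\cdot\rangle$ with remainder uniform in $t\in(0,1]$. You propose to derive this from a ``Duhamel-type estimate on $e^{-uL^t}-e^{-L^0_u}$,'' but this conflates two distinct asymptotic regimes. The comparison of $\psi_{1/\sqrt u}e^{-uL^t}$ with the model harmonic oscillator $e^{-L^0_u}$ in (\ref{äq1}) is a \emph{large-$p$} (equivalently $t\to0$) statement, valid uniformly only for $u$ in a compact subset of $\R_{>0}$; the error term is $O(p^{-1/(4n+2)})$, which is not small as a power of $u$ and degenerates as $u\searrow0$. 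Knowing that $e^{-L^0_u}$ has a small-$u$ expansion therefore does not transfer to $e^{-uL^t}$ via this comparison, and a Duhamel integral between $e^{-uL^t}$ and $e^{-L^0_u}$ is not even a standard same-time semigroup comparison (note $L^0_u$ itself carries $u$-dependence built into its definition). Moreover, the spectral gap you invoke controls the bottom of the spectrum and hence the large-$u$ behaviour, not the short-time parametrix.

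What the paper actually does — and what you are missing — is to appeal to the classical parameter-dependent heat kernel expansion on a \emph{compact} manifold. Finite propagation speed shows that the relevant kernel near the diagonal only sees $L^t_{z_0}$ restricted to $B^{T_{\R,z_0}Z}_{2\ve_0}(0)$; this ball is then embedded into the sphere bundle $\mathbb{S}$ over $Z$, a compact total space, and $L^t_{z_0}$ is extended to a generalized Laplacian $\tilde L_{t,z_0}$ on $\mathbb{S}_{z_0}$. One shows $\bigl\langle\gamma^{-1}V\,\bigm|\,e^{-uL^t_z}-e^{-u\tilde L^t_z}\,\bigm|\,V\bigr\rangle$ is $O\bigl(\exp(-\ve_0^2/32u)\bigr)$, and then the standard Minakshisundaram--Pleijel-type expansion on the compact space $\mathbb{S}$, with coefficients depending smoothly on the compact parameter set $(z_0,t)\in Z\times[0,1]$, immediately yields the expansion with a remainder constant $C$ independent of $t$ (hence of $p$). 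This compactification device is precisely what removes the non-compactness of $Z_0$ and the $p$-dependence from the short-time parametrix; without it, your proposal as written does not close.
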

\begin{proof} For sake of completeness, we reproduce the necessary steps for the proof of this result given in \cite[Thm. 2.21, Cor. 2.22]{P15} and show the off-diagonal adjustments.

As in Subsection \ref{subsec 2.1}, the problem can be localized near $z_0\in Z$ and we rescale the superconnection as in (\ref{rescale}) to obtain the operator $p^{n-n_{\gamma}}L^t_{z_0}$. By the finite propagation speed of the wave operator \cite[Thm. D.2.1]{MMa07}, for $t$ small, $\langle0\mid\tilde{F}_{u,\ve_0}(uL^t_{z_0})\mid\cdot\rangle $ only depend on the restriction of $L^t_{z_0}$ on $B^{T_{\R,z_0}Z}_{2\ve_0}(0)$ and is supported in $B^{T_{\R,z_0}Z}_{2\ve_0}(0)$. Consider the sphere bundle 
\[\mathbb{S}:=\{(V,r)\in T_\R Z\times\R\mid\|V\|^2+r^2=1\}\] 
over $Z$. The ball $B^{T_{\R,z_0}Z}_{2\ve_0}(0)$ will be embedded in $\mathbb{S}_{z_0}$ by the map 
\[V\mapsto\bigl(V,\sqrt{1-\|V\|^2}\bigr)\]
and the operator $L^t_{z_0}$ will be extended to a generalized Laplacian $\tilde{L}_{t,z_0}$ on $\mathbb{S}_{z_0}$ with values in $\pi_{\scalebox{0.6}{$\displaystyle{T_\R Z}$}}^*\End(\mathbf{E}_p)$. By \cite[Prop. 2.2, (2.54)]{P15}, we see that \cite[(2.93)]{P15} still holds outside of the diagonal, i.e. we have
\begin{align*}
&\Bigl\|\bigl\langle\gamma^{-1}V\bigm|e^{-u L^t_z}-e^{-u \tilde{L}^t_z}\bigm|V\bigr\rangle\Bigr\|_{C^m(M\times[0,1],\End(\mathbf{E}_p))}\le C\exp\Bigl(-\frac{\ve_0^2}{32u}\Bigr).
\end{align*}
As the total space of $\mathbb{S}$ is compact, the heat kernel $\bigl\langle\gamma^{-1}V\bigm|e^{-u\tilde{L}_t}\bigm|V\bigr\rangle$ has an asymptotic expansion as $u\searrow0$ starting with $u^{-n_\gamma}$ which depends smoothly on the parameters $z_0$ and $t$ (\cite[Ch. 6]{BeGeV04}). From this, we get the first statement and the second statement follows from (\ref{äq1}), (\ref{äq2}) and (\ref{äq3}).
\end{proof}

\subsection{Large Parameter Estimates and Proof of Theorem 1}\label{subsec 2.3}
The first goal of this subsection is to prove that the norm of $p^{-n_\gamma}\psi_{1/\sqrt{p}}\Tr_s\bigl[\gamma N_{u/p}\exp({-B^2_{p,u/p}})\bigr]$ for $u\ge1$ is bounded by $\frac{C}{\sqrt{u}}$ for a constant $C>0$ which will permit us to use Lebesgue's dominated convergence theorem in the prove of Theorem \ref{Th1}. For $A\in\mathfrak{A}^\bullet(B,\{\End(\mathfrak{A}^{0,\bullet}(Z_b,\mathcal{E}_b\otimes\mathcal{L}_b^p))\}_{b\in B})$ and $q\ge1$, let $\|A\|_q$ be the Schatten $q$-norm given by
\[\|A\|_q:=\Bigl(\Tr\Bigl[(A^*A)^{q/2}\Bigr]\Bigr)^{1/q}.\]
Because $\gamma$ is an unitary operator and the Schatten norms are unitary invariant, we have $\|\gamma A\|_q=\|A\|_q$. Moreover, as the operators we are considering commutes with $\gamma$, the arguments showing boundedness remains the same as in the non-equivariant case of \cite[Sec. 3.9]{P23} and we will not repeat those arguments if not necessary. Puchol's paper \cite{P23} already provides all the analytical tools that are needed. The rather unclear steps will be those where the $p^{-n}$ are replaced by $p^{-n_\gamma}$ which will be explained in greater detail.

By the spectral gap property, there exists $\vartheta >0$ such that for $p$ suffieciently large,
\begin{align}\label{spectrum}
\Sp\big(B^2_p/p\big)\subset \{0\}\cup[\vartheta,\infty[.
\end{align}
\noindent Let $\delta+\Delta$ be the contour in $\C$ indicated in the following figure: 
\begin{figure}[H]
\begin{center}
\begin{tikzpicture}
\draw[->] (-4,0) -- (3,0);
\draw[->] (-2,-1.8) -- (-2,1.8);
\draw[line width=1.2,-](-0.5,-1) -- (-0.5,1);	
\draw[line width=1.2,-](-0.5,-1) -- (3,-1);	
\draw[line width=1.2,-](-0.5,1) -- (3,1);	
\draw[decoration={markings,mark=at position 0.4 with {\arrow{latex}},
mark=at position 0.9 with {\arrow{latex}}},postaction={decorate},line width=1.2] (-2,0) circle (0.8cm);
\filldraw (-2,0) circle (1pt);
\filldraw (-2,1) circle (1pt);
\filldraw (-2,-1) circle (1pt);
\filldraw (-1.2,0) circle (1pt);
\filldraw (-0.5,0) circle (1pt);
\draw[,-{Latex[scale=1.2]}] (1.5,1) -- (1.4,1);
\draw[,-{Latex[scale=1.2]}] (0,-1) -- (0.5,-1);
\draw[] (3.3,0) node {$\R$};
\draw[] (-2,2.1) node {$i\R$};
\draw[] (-1.8,-0.25) node {$0$};
\draw[] (-2.2,1.2) node {$i$};
\draw[] (-2.35,-1.2) node {$-i$};
\draw[] (-1.05,-0.3) node {$\frac{\vt}{2}$};
\draw[] (-0.3,-0.3) node {$\frac{\vt}{4}$};
\draw[] (1.5,-0.7) node {$\Delta$};
\draw[] (-3,-0.5) node {$\delta$};
\end{tikzpicture}
\end{center}
\caption{Contour $\delta+\Delta$.}
\end{figure}
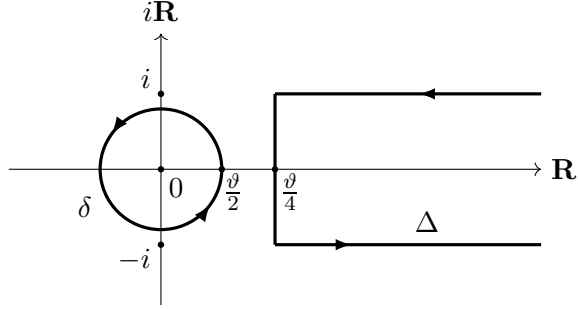
\noindent Define the operators
\begin{align*}
&\mathbb{K}_{p,u}:=\frac{1}{2\pi i}\psi_{1/\sqrt{u}}\int_\Delta e^{-u\lambda}(\lambda-B^2_p/p)^{-1}\,d\lambda,\\
&\mathbb{P}_{p,u}:=\frac{1}{2\pi i}\psi_{1/\sqrt{u}}\int_\delta e^{-u\lambda}(\lambda-B^2_p/p)^{-1}\,d\lambda.
\end{align*}

\begin{Lemma}\label{large param} For any $k,m\in\N_0$, there are $c,C_1,C_2,C_3>0$ such that for $u\ge1$ and $p\in\N$:
\begin{enumerate}[label=\alph*)]
\item $\Bigl\|p^{-n_\gamma}\Tr_s\Bigl[\gamma N_u\mathbb{K}_{p,u}\Bigr]\Bigr\|_{C^m(B,\Lambda^\bullet T^*_\C B)}
\le C_1e^{-cu},$
\item $\Bigl\|p^{-n_\gamma}\Tr_s\Bigl[\gamma N_u\mathbb{P}_{p,u}\Bigr]\Bigr\|_{C^m(B,\Lambda^\bullet T^*_\C B)}\le\frac{C_2}{\sqrt{u}},$
\item $\Bigl\|p^{-n_\gamma}\psi_{1/\sqrt{p}}\Tr_s\Bigl[\gamma N_{u/p}\exp({-B^2_{p,u/p}})\Bigr]\Bigr\|_{C^m(B,\Lambda^\bullet T^*_\C B)}\le\frac{C_3}{\sqrt{u}}.$
\end{enumerate}
\end{Lemma}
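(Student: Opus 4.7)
The overall plan is to treat parts (a) and (b) separately by contour-integral methods and then deduce (c) from them. The starting identity is
\[
\psi_{1/\sqrt{p}}\,e^{-B_{p,u/p}^2}\;=\;\psi_{1/\sqrt u}\,e^{-(u/p)B_p^2}\;=\;\mathbb{K}_{p,u}+\mathbb{P}_{p,u},
\]
which follows by writing $e^{-(u/p)B_p^2}$ as a Cauchy integral of the resolvent over the original contour $\Gamma$ of Subsection \ref{subsec 2.1} and, for $p$ large enough that (\ref{spectrum}) holds, deforming $\Gamma$ into $\delta\cup\Delta$. Granted (a) and (b), part (c) then falls out immediately since $e^{-cu}\le C/\sqrt u$ for $u\ge 1$.

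For (a), I would exploit the fact that $\Re\lambda\ge\vartheta/4$ on $\Delta$, so $|e^{-u\lambda}|\le e^{-u\vartheta/4}$ uniformly. The operator norm estimate $\|(\lambda-B_p^2/p)^{-1}\|_\infty\le C|\lambda|^k p^{k'}$ from \cite[(3.65)]{P23}, combined with the Schatten-class estimates of \cite[Sec.~3.9]{P23} applied to $N_u\psi_{1/\sqrt u}(\lambda-B_p^2/p)^{-1}$, transfer without change to the equivariant setting because $\gamma$ is unitary and Schatten norms are unitarily invariant. The remaining issue is that the naïve trace bound grows like $p^n$. One controls this via the equivariant localization near the fixed-point set performed in Subsection \ref{subsec 2.1} (applied with $B_p^2$ replaced by $(\lambda-B_p^2/p)^{-1}$): the $C^m$-norm of the kernel off the fixed-point set contributes negligibly, and integrating over the fixed-point neighbourhood yields a bound of order $p^{n_\gamma}$. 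Division by $p^{n_\gamma}$ leaves a polynomial in $|\lambda|$, which is integrable on $\Delta$ against the Gaussian factor $e^{-u\vartheta/4}$; higher derivatives in $b\in B$ add extra polynomial factors in $p$ and $|\lambda|$ that are still absorbed by the exponential decay for $u\ge 1$.

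For (b), the contour $\delta$ encloses only the zero eigenvalue of $B_p^2/p$ by (\ref{spectrum}). Decomposing $B_p^2/p=(\overline\partial_p+\overline\partial_p^*)^2/p+A^{2,(+)}/p$, where $A^{2,(+)}$ strictly raises horizontal degree, one has the finite expansion
\[
(\lambda-B_p^2/p)^{-1}=\sum_{k=0}^{\dim_\R B}\bigl(\lambda-(\overline\partial_p+\overline\partial_p^*)^2/p\bigr)^{-1}\Bigl(A^{2,(+)}/p\cdot\bigl(\lambda-(\overline\partial_p+\overline\partial_p^*)^2/p\bigr)^{-1}\Bigr)^{k},
\]
which terminates because $\Lambda^{>\dim_\R B}T_\C^*B=0$. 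Computing the residue at $\lambda=0$ via the Laurent expansion of each factor gives a finite sum of terms involving the orthogonal projection $P_0$ onto $\ker(\overline\partial_p+\overline\partial_p^*)^2$, identified by Hodge theory with $H^\bullet(Z,\mathcal{E}\otimes\mathcal{L}^p_{|_Z})$. By the hypothesis $p\ge p_0$ and Kodaira vanishing, this space is concentrated in vertical degree $0$, so $N_V P_0=P_0 N_V=0$. Therefore $\Tr_s[\gamma N_V \mathbb{P}_{p,u}]$ reduces to those terms in which at least one $A^{2,(+)}/p$ factor separates two copies of $P_0$; each such factor raises the horizontal degree by at least one and is rescaled by $\psi_{1/\sqrt u}$ into a factor $u^{-1/2}$ or smaller. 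The remaining piece $\Tr_s[\gamma(-i\omega^{H\overline H}/u)\mathbb{P}_{p,u}]$ carries an explicit $1/u$. Finally, the kernel-of-$B_p^2$ traces $\Tr_s[\gamma P_0(\cdots)]$ are controlled by the equivariant Riemann–Roch asymptotics of $\Tr[\gamma\,|\,H^0(Z,\mathcal{E}\otimes\mathcal{L}^p)]=O(p^{n_\gamma})$, so the prefactor $p^{-n_\gamma}$ makes everything uniformly bounded, and the $u^{-1/2}$ gain yields (b) with the appropriate $C^m$-norm by repeating the argument on horizontal derivatives.

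The main obstacle is the bookkeeping in (b): making rigorous the claim that each of the finitely many terms in the residue expansion of $(\lambda-B_p^2/p)^{-1}$ at $\lambda=0$ carries at least one net power of $u^{-1/2}$ after the $\psi_{1/\sqrt u}$ rescaling, once one accounts for the $N_V=0$ vanishing on $\ker$, and simultaneously verifying that the equivariant trace on the kernel behaves like $p^{n_\gamma}$ uniformly in $C^m(B)$. Once this is established, the estimates of \cite[Sec.~3.9]{P23} (applied as above, with $\gamma$ inserted and Schatten norms unitarily invariant) furnish the remaining analytic inputs, and part (c) is a direct consequence of (a), (b), and the contour deformation identity.
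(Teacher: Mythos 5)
Your part (c) and the contour decomposition $\psi_{1/\sqrt u}e^{-(u/p)B_p^2}=\mathbb{K}_{p,u}+\mathbb{P}_{p,u}$ are exactly as in the paper, and part (b) is in substance the paper's argument: the finite Neumann expansion of $(\lambda-B_p^2/p)^{-1}$ about $\lambda=0$, the observation that $N_VP_p=P_pN_V=0$ because Kodaira vanishing concentrates the fibrewise kernel in degree zero, the $u^{-1/2}$ gain per insertion of $A^{2,(+)}$ after $\psi_{1/\sqrt u}$-rescaling, and the equivariant Riemann--Roch/Atiyah--Segal--Singer input giving $\Tr_s\bigl[\gamma_{|_{\ker(\overline\partial_p+\overline\partial_p^*)}}\bigr]=O(p^{n_\gamma})$. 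The paper sources the combinatorics from \cite[(3.263)--(3.272)]{P23}, but the structure you lay out is the same.

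The genuine gap is in your part (a). You propose to obtain the crucial $p^{n_\gamma}$ (rather than $p^n$) scaling by ``applying the localization of Subsection~\ref{subsec 2.1} with $B_p^2$ replaced by $(\lambda-B_p^2/p)^{-1}$.'' That step cannot be carried over: the localization mechanism in Subsection~\ref{subsec 2.1} is finite propagation speed for the wave operator, which is what forces $\langle z\mid\tilde F_{u,\vr}(tB_p^2)\mid z'\rangle$ to vanish when $\dist^Z(z,z')\ge\vr$. A resolvent $(\lambda-B_p^2/p)^{-1}$ has no such exact support property; its off-diagonal decay (Combes--Thomas-type) would have to be proved separately and made uniform in $p$ and $\lambda\in\Delta$, and nothing in Subsection~\ref{subsec 2.1} does that. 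The paper avoids this by never localizing the resolvent directly: it writes $\Tr\bigl[\gamma H_p^{-k}\bigr]=-\frac{1}{(k-1)!}\int_0^\infty\Tr\bigl[\gamma e^{-tH_p}\bigr]t^{k-1}\,dt$ with $H_p=(\overline\partial_p+\overline\partial_p^*)^2/p-\lambda_0$, splits the integral at $t=1$, bounds the large-$t$ part using the degree-zero heat-trace estimate (Lemma~\ref{äqkern}) and the small-$t$ part using the expansion of Lemma~\ref{äqcor}. This yields the uniform Schatten-1 bound $p^{-n_\gamma}\|\gamma(\lambda_0-B_p^2/p)^{-q}\|_1\le C$, which is then promoted to $C^m$ via the $\gamma$-invariance of $\nabla^{\End(\mathbf{E}_p)}$ and the arguments of \cite[Lemma~3.42, Prop.~3.43]{P23}. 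To repair your (a), replace the resolvent localization by this heat-trace representation and the bounds already supplied by Lemmas~\ref{äqkern} and~\ref{äqcor}.
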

\begin{proof} The constants appearing in the proof will often be denoted by the same letter $C$ even though their value differ from one equation to another. In the proof we will assume without loss of generality that in the sequel, $\Sp(B^2_p/p)\subset\{0\}\cup[\vt,\infty]$ holds for $p\in\N$.\\
$a):$ Set
\[H_p:=(\overline{\partial}_p+\overline{\partial}_p^*)^2/p-\lambda_0.\]
From \cite[(3.242), (3.243)]{P23}, for $k\gg1$ large enough,
\begin{align}\label{äqint}
\Tr\bigl[\gamma H_p^{-k}\bigr]=-\frac{1}{(k-1)!}\int_0^\infty\Tr\bigl[\gamma e^{-tH_p}\bigr]t^{k-1}\,dt.
\end{align}
By Lemma \ref{äqkern} in degree zero, $p^{-n_\gamma}\Tr\bigl[\gamma e^{(\overline{\partial}_p+\overline{\partial}_p^*)^2/p}\bigr]$ and its derivatives are bounded. Thus, as in \cite[(3.244)]{P23}, for $m\in\N_0$, there is $C>0$ such that for $t\ge1$ and $p\in\N$,
\begin{align}\label{äqa}
p^{-n_\gamma}\Bigl\|\Tr\bigl[\gamma H_p^{-k}\bigr]\Bigr\|_{C^m(B)}\le p^{-n_\gamma}\Bigl\|\Tr\bigl[\gamma e^{(\overline{\partial}_p+\overline{\partial}_p^*)^2/p}\bigr]\Bigr\|_{C^m(B)}e^{\lambda_0t}\le Ce^{\lambda_0 t}. 
\end{align}
In the same way as in \cite[(3.245)]{P23}, we find by using Lemma \ref{äqcor} in degree $0$ that for any $k,m\in\N_0$ there exist $a_{\gamma,p,j}\in\R$ and $C>0$ such that for any $t\in]0,1]$ and $p\in\N$,
\begin{align}\label{äqb}
\Bigl\|p^{-n_\gamma}\Tr_s\Bigl[\gamma e^{-\frac{t}{p}(\overline{\partial}_p+\overline{\partial}_p^*)^2}\Bigr]-\sum_{j=-n_\gamma-1}^ka_{\gamma,p,j}t^j\Bigr\|_{C^m(B)}\le Ct^{k+1}.
\end{align}
Splitting the integral in (\ref{äqint}) at $t=1$ and using (\ref{äqa}), (\ref{äqb}), we get for $k$ large enough,
\[p^{-n_\gamma}\Bigl\|\Tr\bigl[\gamma H_p^{-k}\bigr]\Bigr\|_{C^m(B)}\le C.\]
Thus, there exists $q_0\in\N_0$ such that for $q>q_0$, there is $C>0$ such that
\[p^{-n_\gamma}\Bigl\|\gamma(\lambda_0-B^2_p/p)^{-q}\Bigr\|_1\le p^{-n_\gamma}\Bigl\|\Tr\bigl[\gamma H_p^{-q}\bigr]\Bigr\|_{C^m(B)}\le C.\]
With the same arguments as in the proof of \cite[Lemma 3.42]{P23}, we obtain its equivariant version, i.e. for $\lambda_0\in\R_{>0}$, there exists $q_0\in\N_0$ such that for $q\ge q_0$, $U\in T_\R B$ and $m\in\N_0$, there exists $C>0$ such that for $p\in\N$,
\begin{align}\label{äqLemma}
p^{-n_\gamma}\Bigl\|\bigl(\nabla_U^{\End(\mathbf{E}_p)})^m\gamma(\lambda_0-B^2_p/p)^{-q}\Bigr\|_1\le C,
\end{align}
where we use that $\nabla_U^{\End(\mathbf{E}_p)}$ is a $\gamma$-invariant connection for the case $m\ge1$. Now the proof proceeds as in \cite[Prop. 3.43]{P23} by using (\ref{äqLemma}) instead of \cite[Lemma 3.42]{P23}.\\[0.4em]
$b):$ Recall that the notation $B_p^{2,(+)}$ was explained in Section \ref{sec 0}. For $b\in B$, let $P_{p,b}$ be the orthogonal projection from $\mathfrak{A}^{0,\bullet}(Z_b,\mathcal{E}_b\otimes\mathcal{L}_b^p)$ to $\ker(\overline{\partial}_p^{Z_b}+\overline{\partial}_p^{Z_b*})$ and put $P_p:=\{P_{p,b}\}_{b\in B}$. By proceeding as in \cite[(3.263)-(3.265)]{P23}, the operator $\gamma\mathbb{P}_{p,u}$ can be rewritten as
\begin{align*}
\gamma\mathbb{P}_{p,u}&=\frac{1}{2\pi i}\psi_{1/\sqrt{u}}\int_\delta\gamma e^{-\lambda}(\lambda-\frac{u}{p}B^2_p)^{-1}\,d\lambda\nonumber\\
&=\psi_{1/\sqrt{u}}\sum_{l=0}^{\dim_\R B}\sum\limits_{\substack{1\le i_0\le l+1\\j_1,\dots,j_{l+1-i_0}\ge0\\\sum_{m=1}^{l+1-i_0}j_m\le i_0-1}}\frac{(-1)^{l-\sum_mj_m}}{(i_0-1-\sum_mj_m)!}T_{p,1}\Big(\frac{u}{p}B^{2,(+)}_p\Big)T_{p,2}\ldots\Big(\frac{u}{p}B^{2,(+)}_p\Big)T_{p,l+1}, 
\end{align*}
where $\gamma P_p$ appears $i_0$ times among the $T_{p,j}$ and the other terms are given, respectively, by $(\frac{u}{p}(\overline{\partial}_p+\overline{\partial}_p^*)^2)^{-(1+j_1)},\ldots,(\frac{u}{p}(\overline{\partial}_p+\overline{\partial}_p^*)^2)^{-(1+j_{l+1-i_0})}$.
From \cite[(3.271)]{P23}, each term in the sum is a product of uniformly bounded terms in which $\gamma P_p$ appears since $i_0\ge1$. By using the Atiyah-Segal-Singer index formula in \cite[(3.272)]{P23} instead, which implies that $\Tr_s[\gamma_{|_{\ker(\overline{\partial_p}+\overline{\partial_p}^*)}}]$ is a polynomial in $p$ of degree $n_\gamma$, and proceeding in the same way, the claim follows for $m=0$. Again, by the equivariance of the connections the case $m\ge1$ follows from the same reasoning as in the non-equivariant case.\\
$c):$ Since we have
\begin{align*}
\psi_{1/\sqrt{p}}\Tr_s\Bigl[\gamma N_{u/p}e^{-B^2_{p,u/p}}\Bigr]=\Tr_s\Bigl[\gamma N_u\psi_{1/\sqrt{u}}\bigl(e^{-\frac{u}{p}B^2_p}\bigr)\Bigr]=\Tr_s\bigl[\gamma N_u(\mathbb{P}_{p,u}+\mathbb{K}_{p,u})\bigr],
\end{align*}
the statement follows directly from $a)$ and $b)$.
\end{proof}
\begin{proof}[\textit{Proof of Theorem \ref{Th1}}] By the assumption from the Introduction, we have $H^k(Z,(\mathcal{E}\otimes\mathcal{L}^p)_{|_Z})=0$ for $k>0$ and $p\ge p_0$. In the proof we only consider such $p\ge p_0$. Define formally
\begin{align*}
&\tilde{\zeta}_{\gamma,p}(s):=-\frac{p^{-n_\gamma}}{\Gamma(s)}\int_0^\infty u^{s-1}\psi_{1/\sqrt{p}}\vp\bigr(\Tr_s\bigl[\gamma N_{u/p}\exp(-B^2_{p,u/p})\bigr])\,du.
\end{align*}
In the same fashion as in the discussion before Definition \ref{holom Torsion}, $\tilde{\zeta}_{\gamma,p}$ is a well-defined holomorphic function in $s$ near $0$. It satisfies
\begin{align}\label{proof thm 1 a}
p^{-n_\gamma}\psi_{1/\sqrt{p}}\zeta_{\gamma,p}(s)&=p^{-s}\tilde{\zeta}_{\gamma,p}(s),\nonumber\\
p^{-n_\gamma}\psi_{1/\sqrt{p}}\zeta'_{\gamma,p}(0)&=-\log(p)\tilde{\zeta}_{\gamma,p}(0)+\tilde{\zeta}_{\gamma,p}'(0).
\end{align}
On the other hand, with Lemma \ref{äqcor} we have 
\begin{align}\label{proof thm 1 b}
\tilde{\zeta}'_{\gamma,p}(0)=&-\int_0^1p^{-n_\gamma}\vp\Bigr(\psi_{1/\sqrt{p}}\Tr_s\bigl[\gamma N_{u/p}\exp(-B^2_{p,u/p})]-\sum_{j=-d-1}^0\tilde{b}_{\gamma,p,j}u^j\Bigr)\,\frac{du}{u}\nonumber\\
&-\int_1^\infty p^{-n_\gamma}\vp\psi_{1/\sqrt{p}}\Tr_s\bigl[\gamma N_{u/p}\exp(-B^2_{p,u/p})]\,\frac{du}{u}-\sum_{j=-d-1}^{-1}\frac{\tilde{b}_{\gamma,p,j}}{j}+\Gamma'(1)\tilde{b}_{\gamma,p,0},\\\label{proof thm 1 c}
\tilde{\zeta}_{\gamma,p}(0)=&-\vp \tilde{b}_{\gamma,p,0}.
\end{align}
Now set
\begin{align*}
\underline{\nu}_{\gamma,u,l}:&=\int_{z\in Z^\gamma_l}\int\limits_{\substack{V\in N_{Z_l^\gamma/Z,\R,z}}}e_{\gamma,u}(z,V)\Tr_s\Bigl[\gamma^\mathcal{E}N_uI_u(z,V)\Bigr]\,\dvol_{g^{N_{Z^\gamma/Z,\R}}}(V)\dvol_{g^{T_\R Z^\gamma}}(z),\\
\nu_{\gamma,u,e^{ip\theta}}:&=\sum_{l\in L_0}e^{ip\theta}_{|_{Z_l^\gamma}}\cdot\underline{\nu}_{\gamma,u,l},
\end{align*}
and let $\theta_{\gamma,e^{ip\theta}}(s)$ be the Mellin transform of $u\mapsto\nu_{\gamma,u,e^{ip\theta}}$, 
\[\theta_{\gamma,e^{ip\theta}}(s)=-\frac{1}{\Gamma(s)}\int_0^\infty\nu_{\gamma,u,e^{ip\theta}}u^{s-1}\,du.\]
By (\ref{äq3}), we see
\begin{align}\label{proof thm 1 d}
\theta_{\gamma,e^{ip\theta}}'(0)=&-\int_0^1\Bigl(\nu_{\gamma,u,e^{ip\theta}}-\sum_{j=-d-1}^0b_{\gamma,j,e^{ip\theta}}u^j
\Bigr)\,\frac{du}{u}\nonumber\\
&-\int_1^\infty\nu_{\gamma,u,e^{ip\theta}}\,\frac{du}{u}-\sum_{j=-d-1}^{-1}\frac{b_{\gamma,j,e^{ip\theta}}}{j}+\Gamma'(1)b_{\gamma,0,e^{ip\theta}}.
\end{align}
From Lemma \ref{äqkern}, Lemma \ref{äqcor}, Lemma \ref{large param} c) and (\ref{proof thm 1 a})-(\ref{proof thm 1 d}), we get
\[\psi_{1/\sqrt{p}}\zeta'_{\gamma,p}(0)=\log(p)p^{n_\gamma}\vp b_{\gamma,0,e^{ip\theta}}+p^{n_\gamma}\rk(\mathcal{E})\vp\theta_{\gamma,e^{ip\theta}}'(0)+o(p^{n_\gamma}).\]
To end the proof we will need the following:
\begin{Lemma}[{\cite[Lemma 3.27]{P23}}]\label{Ascoli}
Let $X$ be a compact manifold and $(E,h^E)$ a Hermitian vector bundle on $X$ with connection $\nabla^E$. Let $\{f_n\}_n\subset\Gamma(X,E)$ be a sequence converging weakly to some distribution $f$. If for any $k\in\N_0$ there is $C_k>0$ such that $\sup_n\|f_n\|_{C^k(X,E)}\le C_k$, then $f$ is smooth and $f_n$ converges in the $C^\infty$ topology to $f$.
\end{Lemma}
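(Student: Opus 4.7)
The plan is to combine the Arzelà--Ascoli theorem with uniqueness of distributional limits: the uniform control in every $C^k$ norm gives precompactness in each such norm, while weak convergence pins down the unique limit of every convergent subsequence.

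First I would show that $\{f_n\}$ is precompact in $C^k(X,E)$ for each fixed $k\in\N_0$. Covering $X$ by finitely many local trivialisations of $E$ and working coordinate by coordinate, the uniform $C^{k+1}$ bound implies that the partial derivatives up to order $k$ are uniformly bounded and uniformly Lipschitz, hence equicontinuous. Arzelà--Ascoli on each chart together with a standard diagonal extraction produces, from any subsequence of $\{f_n\}$, a further subsequence that converges in $C^k(X,E)$.

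Next, I would identify every subsequential $C^k$-limit with $f$. Any $C^k$-convergent subsequence also converges as a distribution, and distributional limits are unique, so the $C^k$-limit must equal $f$. Consequently $f\in C^k(X,E)$ for all $k$, hence $f\in C^\infty(X,E)$. To upgrade to convergence of the whole sequence, I would argue by contradiction: if $f_n\not\to f$ in some $C^m$, there would be $\ve>0$ and a subsequence with $\|f_{n_j}-f\|_{C^m(X,E)}\ge\ve$; but Arzelà--Ascoli applied to that subsequence yields a further $C^m$-convergent sub-subsequence whose limit must again be $f$, contradicting the lower bound. Thus $f_n\to f$ in every $C^m$, i.e. in the $C^\infty$ topology.

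The only real subtlety is notational: the norm $\|\cdot\|_{C^k(X,E)}$ is defined via iterated covariant derivatives using $\nabla^E$ and a Riemannian metric on $X$, not via Euclidean partial derivatives in a chart. I would therefore verify that on each of the finitely many trivialisations, the change-of-frame matrices and Christoffel symbols of $\nabla^E$ are smooth and hence bounded, so that the intrinsic $C^{k+1}$ bound is equivalent to a uniform bound on all partial derivatives up to order $k+1$ in local coordinates. Once this equivalence is in place, the Arzelà--Ascoli step is entirely routine and the rest of the argument goes through as above.
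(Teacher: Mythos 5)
The paper does not prove this lemma itself but cites it directly from \cite[Lemma 3.27]{P23}, so there is no in-text proof to compare against. Your argument is the standard one (and presumably what underlies the citation): uniform $C^{k+1}$ bounds give equicontinuity of $k$-th derivatives, Arzelà--Ascoli yields $C^k$-precompactness, uniqueness of distributional limits identifies every subsequential limit with $f$, and the subsequence trick upgrades this to convergence of the full sequence in every $C^m$; your closing remark about reconciling the intrinsic $C^k$-norm built from $\nabla^E$ with chart-wise partial derivatives is exactly the right technical point to address. The proof is correct.
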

\noindent If we define $\underline{\theta}_{\gamma,l}(s)$ to be the Mellin transform of $u\mapsto\underline{\nu}_{\gamma,u,l}$, the statement then follows from Lemma \ref{Ascoli} with
\begin{align*}
&\alpha_{\gamma,l}:=\vp \underline{b}_{\gamma,0,l}\qquad\text{and}\qquad\beta_{\gamma,l}:=\rk(\mathcal{E})\vp\underline{\theta}_{\gamma,l}'(0).\qedhere
\end{align*}
\end{proof}

\phantomsection
\addcontentsline{toc}{section}{References} 
\bibliographystyle{tessmeramsalpha}
{\small\bibliography{Literatur}}

\vspace{0.5em}
 
\textit{Email address:} \href{mailto:pascal\_tessmer@yahoo.de}{\color{black}{\texttt{pascal\_tessmer@yahoo.de}}}

\end{document}